\def\CC{{\mathbb{C}}}
\def\NN{{\mathbb{N}}}
\def\cH{\mathcal{H}}
\def\cL{\mathcal{L}}
\def\cR{\mathcal{R}}
\def\cK{\mathcal{K}}
\def\cB{\mathcal{B}}
\def\dd{^{\| d}}
\def\de{^{\| e}}
\def\io{^{-1}}
\def\grp{^\#}
\def\iff{\Leftrightarrow}
\def\beqn{\begin{eqnarray*}}
\def\eeqn{\end{eqnarray*}}
\def\barr{\begin{array}}
\def\earr{\end{array}}
\def\ben{\begin{enumerate}}
\def\een{\end{enumerate}}
\def\l({\left(}
\def\r){\right)}
\def\bmx{\left[\begin{array}}
\def\emx{\end{array}\right]}
\newcommand{\fonction}[5]{$$\begin{array}[t]{lccl}
#1: & #2 & \longrightarrow & #3 \\
    & #4 & \longmapsto & #5 \end{array}$$}
\newtheorem{theorem}{Theorem}[section]
\newtheorem{lemma}[theorem]{Lemma}
\newtheorem{proposition}[theorem]{Proposition}
\newtheorem{corollary}[theorem]{Corollary}
\newtheorem{definition}[theorem]{Definition}
\begin{document}

\title{Natural generalized inverse and core of an element in semigroups, rings and Banach and Operator algebras}

\author{Xavier Mary\footnote{email: xavier.mary@u-paris10.fr}\\
\textit{\small Universit\'e Paris-Ouest Nanterre-La D\'efense, Laboratoire Modal'X}}

\date{}

\maketitle

\begin{keyword} generalized inverses; Koliha-Drazin inverse \MSC Primary 15A09 \sep 47A05\sep 47A11
\end{keyword}
%\subjclass{Primary 15A09; 20M99} \keywords{generalized inverse,
% Green's relations, semigroup}

\begin{abstract}
Using the recent notion of inverse along an element in a semigroup, and the natural partial order on idempotents, we study bicommuting generalized inverses and define a new inverse called natural inverse, that generalizes the Drazin inverse in a semigroup, but also the Koliha-Drazin inverse in a ring. In this setting we get a core decomposition similar to the nilpotent, Kato or Mbekhta decompositions. In Banach and Operator algebras, we show that the study of the spectrum is not sufficient, and use ideas from local spectral theory to study this new inverse. \end{abstract}

\section{Introduction}

%In this paper, $S$ denotes a semigroup, $S^1=S\cup\{1\}$ is the semigroup obtained by $S$ together with an identity in the usual way, and $R$ is a ring (with identity).

In this paper, $S$ is a semigroup and $S^1$ denotes the monoid generated by $S$. $E(S)$ denotes the set of idempotents. For any subset $A$ of $S$, $A'=\{x\in S,\, xa=ax\; \forall a\in A\}$ denotes the commutant (or centralizer) of $A$.

We say $a$ is (von Neumann) regular in $S$ if $a\in aSa$. A particular solution to $axa=a$ is called an associate, or inner inverse, of $a$.  A solution to $xax=a$ is called a weak (or outer) inverse. Finally, an element that satisfies $axa=a$ and $xax=x$ is called an inverse (or reflexive inverse, or relative inverse) of $a$ and is denoted by $a'$. The set of all associates of $a$ is denoted by $A(a)$, and the set of weak inverses of $a$ by $W(a)$. %If $a'\in V(a)$, we also say that $(a, a')$ is a regular pair.

A commuting inverse, if it exists, is unique and denoted by $a\grp$. It is usually called the group inverse of $a$.\\

We will make use of the Green's preorders and relations in a semigroup \cite{Green51}. For elements $a$ and $b$ of $S$, Green's preorders  $\leq_{\cL}$, $\leq_{\cR}$ and
$\leq_{\cH}$ are defined by
\begin{align*}
a \leq_{\cL} b&\Longleftrightarrow S^1 a\subset S^1 b\Longleftrightarrow \exists x\in S^1,\;  a = xb;\\ %\label{eq1}\\
a \leq_{\cR} b& \Longleftrightarrow  aS^1\subset  bS^1\Longleftrightarrow \exists x\in S^1,\;  a = bx;\\ %\label{eq2}\\
a \leq_{\cH} b&\Longleftrightarrow \{a\leq_{\cL} b\text{ and }a\leq_{\cR} b\}.
\end{align*}

If $\leq_{\cK}$ is one of these preorders, then $a\cK b\iff \{a\leq_{\cK}b \text{ and } b\leq_{\cK}a\}$, and $\cK_a=\{b\in S,\; b\cK a\}$ denotes the $\cK$-class of $a$.

We recall the following characterization of group invertibility in terms of Green's relation $\cH$ (see \cite{Green51}, \cite{Clifford56}):
\begin{lemma}\label{lemma_Clifford}
$a\grp$ exists if and only if $a\cH a^2$ if and only if $\cH_a$ is a group.
%\item Let $(a,a')$ be a regular pair. Then $aa'=a'a$ if and only if $a\cH a'$. 
\end{lemma}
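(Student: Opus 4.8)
The plan is to run the cycle of implications
\[
a\grp\text{ exists}\ \imply\ a\,\cH\,a^{2}\ \imply\ \cH_{a}\text{ is a group}\ \imply\ a\grp\text{ exists}.
\]
A preliminary remark: since $a^{2}=a\cdot a$ we always have $a^{2}\leq_{\cL}a$ and $a^{2}\leq_{\cR}a$, so $a^{2}\leq_{\cH}a$; hence the condition $a\,\cH\,a^{2}$ reduces to $a\leq_{\cH}a^{2}$, i.e.\ to the existence of $u,v\in S^{1}$ with $a=a^{2}u$ and $a=va^{2}$. The first implication is immediate: with $x=a\grp$, the relations $axa=a$ and $ax=xa$ give $a=axa=a(ax)=a^{2}x$ and $a=axa=(xa)a=xa^{2}$, so $a\leq_{\cR}a^{2}$ and $a\leq_{\cL}a^{2}$. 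The third implication is also short: let $e$ be the identity of the group $\cH_{a}$ and $b$ the inverse of $a$ inside $\cH_{a}$, so $ab=ba=e$; then $b$ commutes with $a$, while $aba=ea=a$ and $bab=be=b$ (because $ea=ae=a$ and $eb=be=b$, as $e$ is the group identity and $a,b\in\cH_a$), so $b$ is a commuting inverse of $a$ and, by uniqueness, $b=a\grp$.

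The heart of the matter is the middle implication $a\,\cH\,a^{2}\imply\cH_{a}$ is a group. Here $a^{2}=a\cdot a$ exhibits $a^{2}$ as a product of two elements of $\cH_{a}$, and the hypothesis says $a^{2}\in\cH_{a}$; thus the $\cH$-class of $a$ meets the set of products of pairs of its own elements. By the classical theorem of Green (see \cite{Green51}, \cite{Clifford56}), an $\cH$-class with this property is a subgroup of $S$, which is what we want. I expect this to be the only step requiring real care: it ultimately rests on Green's lemma that, for $h\in\cH_{a}$, the inner translations $s\mapsto sh$ and $s\mapsto hs$ induce bijections between the relevant $\cR$- and $\cL$-classes, and it is exactly this bijectivity that forces $\cH_{a}$ to be closed under multiplication and to contain inverses.

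If one wishes to avoid Green's theorem for the \emph{existence} of $a\grp$, the implication $a\,\cH\,a^{2}\imply a\grp$ exists can be made completely explicit. Starting from $a=a^{2}u=va^{2}$, one checks that $va=va^{2}u=au$, so $e:=au=va$ is well defined; that $e^{2}=va^{2}u=va=e$ and $ae=ea=a$; and finally that $b:=vau$ satisfies $ab=ba=e$, $aba=a$ and $bab=b$, whence $a\grp=vau$. Each of these is a one-line computation using only $a^{2}u=a$ and $va^{2}=a$. With this in hand, the only place Green's structure theorem is genuinely needed is to identify $\cH_{a}$ as a group in the middle implication; the converse direction there is trivial, since a group is closed under products and so $a\,\cH\,a^{2}$ follows at once from $a^{2}=a\cdot a\in\cH_{a}$.
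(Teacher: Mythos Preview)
Your argument is correct. The cycle of implications is set up cleanly, the first and third implications are handled correctly, and for the middle implication you both invoke Green's theorem and give a self-contained verification that $a\grp=vau$ works (the identities $e^{2}=va^{2}u=au=e$, $ae=a^{2}u=a$, $ea=va^{2}=a$, and $ab=a(va)u=au=e$, $ba=v(au)a=va=e$ all check out, and then $eb=b$, $be=b$ follow from $b=eu=ve$).

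As for the comparison: the paper does not actually prove this lemma. It is stated as a classical fact and attributed to \cite{Green51} and \cite{Clifford56}, with no argument given. So your write-up is strictly more detailed than the paper's treatment. If anything, the explicit formula $a\grp=vau$ you derive is a nice bonus that the paper does not record.
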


The study of generalized inverses has been conducted in many different mathematical areas, from semigroup theory to Operator theory, and applied to various domains such has Markov chains or differential equations. In these studies, it may be useful to consider commuting (or bicommuting) inverses. Since the existence of a commuting inner inverse is a very strong property, it is common to look at outer commuting inverses, following the seminal work of M. Drazin \cite{Drazin58}, who introduced the Drazin inverse in the context of semigroups and rings. Later, this inverse has been generalized in the setting of operators by Koliha \cite{Koliha96} using spectral properties and functional calculus. This generalized Drazin inverse (also called Koliha-Drazin inverse) finds many applications, in particular to singular differential equations.\\

In \cite{mary} the author introduced a special outer inverse, called inverse along an element in the context of semigroups. The aim of this article is to use this new inverse to we study bicommuting generalized inverses. Then, using the natural partial order on idempotents, we will define a new inverse called natural inverse, that generalizes the Drazin inverse in a semigroup, but also the Koliha-Drazin inverse in a ring. In this setting, this provides a decomposition of an element similar to the nilpotent, Kato or Mbekhta decompositions \cite{Mbekhta87}. In the first sections we introduce the main notions (inverse along an element, natural generalized inverse) entirely in the semigroup setting. We then study further properties of the natural inverse in rings, making the link with quasipolar (generalized Drazin invertible) elements (\cite{Harte91}, \cite{Harte09}, \cite{Koliha96}, \cite{Koliha02}). In the last sections, a particular attention is given to Banach and operators algebras. The main result is that this inverses relies on finer properties than spectral properties only. Local spectral theory\cite{Harte09local} is then an interesting tool. 
 
\section{Inverse along an element}

\subsection{Definition and first properties}
The inverse along an element was introduced in \cite{mary}, and in \cite{maryPatricio}, it was interpreted as a kind of inverse modulo $\cH$. We recall the definition and properties of this inverse.

\begin{definition}
Given $a,d$ in $S$, we say $a$ is invertible along $d$ if there exists $b\in S$ such that  $bad=d=dab$ and $b\leq_\cH d$. If such an element exists then it is unique and is denoted by $a^{\parallel d}$.
\end{definition}

An other characterization is the following:
\begin{lemma}
$a$ is invertible along $d$ if and only if there exists $b\in S$ such that $bab=b$ and $b\cH d$, and in this case $a^{\parallel d}=b$.
\end{lemma}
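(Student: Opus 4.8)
The plan is to prove both implications by exploiting the Green relation $b\,\cH\,d$ to rewrite $d$ as a one-sided multiple of $b$ (and conversely), and then to substitute these expressions into the absorption equations $bad=d=dab$ and $bab=b$. Throughout, the only thing to watch is the left/right orientation of Green's preorders — $a\leq_\cL b$ means $a=xb$ for some $x\in S^1$, while $a\leq_\cR b$ means $a=bx$ — and the harmless passage between $S$ and $S^1$.

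First I would treat the ``only if'' direction. Assume $a$ is invertible along $d$, so there is $b\in S$ with $bad=d=dab$ and $b\leq_\cH d$. Since $b\leq_\cL d$, write $b=xd$ with $x\in S^1$; then, using $d=dab$, $bab=(xd)ab=x(dab)=xd=b$, which is the outer-inverse equation. For the relation $b\,\cH\,d$: one inequality, $b\leq_\cH d$, is part of the hypothesis, while $d=dab=(da)b$ gives $d\leq_\cL b$ and $d=bad=b(ad)$ gives $d\leq_\cR b$, hence $d\leq_\cH b$; combining the two gives $b\,\cH\,d$. Thus $b$ satisfies $bab=b$ and $b\,\cH\,d$, as required.

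For the ``if'' direction, assume $bab=b$ and $b\,\cH\,d$. In particular $b\leq_\cH d$, which is one of the two conditions in the definition of invertibility along $d$. From $d\leq_\cR b$ write $d=bs$ with $s\in S^1$; then $bad=ba(bs)=(bab)s=bs=d$. Symmetrically, from $d\leq_\cL b$ write $d=tb$ with $t\in S^1$; then $dab=(tb)ab=t(bab)=tb=d$. Hence $bad=d=dab$ together with $b\leq_\cH d$, so $a$ is invertible along $d$, and by the uniqueness asserted in the definition, $a^{\parallel d}=b$.

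I do not expect a genuine obstacle here: the lemma is essentially an unpacking of the definition combined with the fact that lying in the same $\cH$-class lets one move freely between $b$ and $d$ by one-sided multipliers. The only place to be careful is the bookkeeping of which multiplier ($x$, $s$, $t$) sits on which side, so that each substitution lands on the factor $d$ or $b$ that the hypothesis $d=dab$, $d=bad$, or $bab=b$ can then absorb.
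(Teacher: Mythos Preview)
Your proof is correct. The paper actually states this lemma without proof, recalling it from the author's earlier article \cite{mary}; your argument is exactly the natural unpacking of the definition via one-sided multipliers coming from $b\,\cH\,d$, and the bookkeeping of left/right multipliers is handled correctly throughout.
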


\begin{theorem}\label{thexist}
Let $a,d\in S$. Then the following are equivalent:
\ben
\item $a^{\parallel d}$ exists.
\item $d\leq_{\cR} da$ and $(da)\grp$ exists.
\item $d\leq_{\cL} ad$ and $(ad)\grp$ exists.
\item $dad\cH d$.
\item $d\leq_{\cH} dad$.
\een
In this case, $$b=d(ad)^{\sharp}=(da)^{\sharp}d.$$
\end{theorem}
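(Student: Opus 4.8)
The plan is to prove the cycle $(1)\Rightarrow(4)\Rightarrow(2)\Rightarrow(1)$ together with $(4)\Rightarrow(3)\Rightarrow(1)$ and $(4)\Leftrightarrow(5)$, arranging the two constructive implications ``$\Rightarrow(1)$'' so that they also produce the final formula. The ingredients are the preceding Lemma (``$a$ invertible along $d$'' is equivalent to the existence of $b$ with $bab=b$ and $b\cH d$, equivalently $bad=d=dab$ with $b\leq_\cH d$) and Lemma~\ref{lemma_Clifford} (existence of $(da)^{\sharp}$ is equivalent to $da\cH(da)^2$). Two facts hold for free and will be used repeatedly: $dad\leq_\cH d$ always, since $dad=(da)d=d(ad)$; and $(da)^2\leq_\cH da$ always. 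Consequently $(4)$ is equivalent to $d\leq_\cH dad$, and the existence of $(da)^{\sharp}$ is equivalent to $da\leq_\cH(da)^2$ (and symmetrically on the $\cL$-side).

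For $(1)\Rightarrow(4)$ and $(4)\Leftrightarrow(5)$: writing $b=a^{\parallel d}$ and using $b\cH d$ to factor $b=pd=dq$ with $p,q\in S^1$, the identities $d=bad$ and $d=dab$ give $d=p(dad)$ and $d=(dad)q$, hence $d\leq_\cH dad$; combined with $dad\leq_\cH d$ this is both $(4)$ and $(5)$, and the equivalence of $(4)$ and $(5)$ is already contained in the free remark. For $(4)\Rightarrow(2)$ (and dually $(4)\Rightarrow(3)$): from $d\leq_\cH dad$ pick $u,v\in S^1$ with $d=u\,dad$ and $d=dad\,v$. Then $d=da(dv)$ shows $d\leq_\cR da$. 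Multiplying $d=u\,dad$ on the right by $a$ gives $da=u(da)^2$, so $da\leq_\cL(da)^2$; substituting $d=dad\,v$ once into the middle occurrence of $d$ in $dad\,v$ gives $d=(da)^2(dv^2)$, whence $da=(da)^2(dv^2a)$ and $da\leq_\cR(da)^2$. Hence $da\cH(da)^2$ and $(da)^{\sharp}$ exists.

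For $(2)\Rightarrow(1)$ (and dually $(3)\Rightarrow(1)$): set $b=(da)^{\sharp}d$ and $g=(da)^{\sharp}(da)=(da)(da)^{\sharp}$, and fix $r\in S^1$ with $d=da\,r$ (from $d\leq_\cR da$). Since $g(da)=(da)(da)^{\sharp}(da)=da$, one gets $gd=g(da\,r)=(da)r=d$, whence $bad=(da)^{\sharp}(da)d=gd=d$ and $dab=(da)(da)^{\sharp}d=gd=d$. Also $b=(da)^{\sharp}d\in S^1d$, while $b=(da)^{\sharp}(da)r=gr\in dS^1$ because $g=d\bigl(a(da)^{\sharp}\bigr)$; moreover $d=dab=(da)b\in S^1b$ and $d=bad=b(ad)\in bS^1$. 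Thus $b\cH d$, so $b=a^{\parallel d}$ and $a^{\parallel d}=(da)^{\sharp}d$; the mirror argument (with $b=d(ad)^{\sharp}$) gives $a^{\parallel d}=d(ad)^{\sharp}$. Finally, when $a^{\parallel d}$ exists, $(1)\Rightarrow(4)\Rightarrow(2),(3)$ shows that $(da)^{\sharp}$ and $(ad)^{\sharp}$ both exist, and uniqueness of the inverse along $d$ forces $d(ad)^{\sharp}=(da)^{\sharp}d=a^{\parallel d}$, which is the last assertion.

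Almost every step is a routine insertion of an available factorization ($b=pd=dq$, $d=da\,r$, $d=u\,dad$, $d=dad\,v$) into the defining identities. The one place that is not pure bookkeeping is establishing $da\leq_\cR(da)^2$ in $(4)\Rightarrow(2)$: the $\cL$-half is immediate by multiplying $d=u\,dad$ by $a$, but for the $\cR$-half one must iterate the relation $d=dad\,v$ once to expose a left factor $(da)^2$. I expect this to be the main (if minor) obstacle; a secondary care point is getting the factorizations oriented correctly so that $b\cH d$ genuinely follows in the $(2)\Rightarrow(1)$ step.
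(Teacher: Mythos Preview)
The paper does not actually prove this theorem. Theorem~\ref{thexist} appears in Section~2.1 (``Definition and first properties''), which opens with ``We recall the definition and properties of this inverse'' and cites \cite{mary} and \cite{maryPatricio}; the statement is listed without proof, as background material. So there is no in-paper argument to compare your proposal against.

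That said, your argument is correct and complete. The cycle $(1)\Rightarrow(4)\Rightarrow(2)\Rightarrow(1)$, the mirror branch $(4)\Rightarrow(3)\Rightarrow(1)$, and the trivial equivalence $(4)\Leftrightarrow(5)$ are all established cleanly. In $(4)\Rightarrow(2)$ your iteration $d=dad\,v=(da)(dad\,v)v=(da)^2dv^2$ to obtain $da\leq_\cR(da)^2$ is exactly the right move, and in $(2)\Rightarrow(1)$ the verification that $b=(da)^\sharp d$ satisfies $bad=d=dab$ via $gd=d$ (using $d=da\,r$) together with $b\leq_\cH d$ is correct. The final formula then follows by uniqueness, as you note. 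This is essentially the argument one finds in \cite{mary} (Theorem~7 there), so your reconstruction matches the original source.
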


For an other look at this inverse, we also refer to \cite{Drazin2012}, where M. Drazin independently defined an new outer inverse that is actually similar to the inverse along an element.

\subsection{Commutativity and idempotents}

A remarkable feature of the inverse along an element is the following (theorem 10 in \cite{mary}).

\begin{theorem}\label{commut}
Let $a,d\in S$ and pose $A=(a,d)$. If $a$ is invertible along $d$,
then $(a)^{\parallel d}\in A''$.
\end{theorem}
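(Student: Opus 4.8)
\emph{Approach.} The whole statement reduces to one classical fact about the ordinary group inverse: if $x\grp$ exists in $S$ and $c\in S$ commutes with $x$, then $c$ also commutes with $x\grp$. Granting this, fix $c\in A'$, i.e. $ca=ac$ and $cd=dc$. Then $c(ad)=(ca)d=a(cd)=(ad)c$, so $c$ commutes with $ad$; by Theorem \ref{thexist} the element $ad$ is group invertible, hence $c$ commutes with $(ad)\grp$. Using the formula $a^{\parallel d}=d(ad)\grp$ from Theorem \ref{thexist},
\[
c\,a^{\parallel d}=c\,d\,(ad)\grp=d\,c\,(ad)\grp=d\,(ad)\grp\,c=a^{\parallel d}\,c .
\]
Since $c\in A'$ was arbitrary, $a^{\parallel d}\in A''$.

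\emph{The key fact.} Let $y=x\grp$ and $e=xy=yx\in E(S)$; the defining identities give $ex=xe=x$ and $ey=ye=y$. Assume $cx=xc$. \textbf{Step 1: $c$ commutes with $e$.} Since $ce=c(xy)=(xc)y=xcy$ and $ec=(yx)c=y(cx)=ycx$, one computes $ece=e(xcy)=(ex)cy=xcy=ce$ and $ece=(ycx)e=yc(xe)=ycx=ec$, whence $ce=ec$. \textbf{Step 2: pass to the corner.} The set $eSe$ is a monoid with identity $e$ in which $x$ is a unit with inverse $y$. Because $c$ commutes with $e$, the element $ece$ lies in $eSe$, and it commutes with $x$ (a direct check using $ex=xe=x$ and $ce=ec$). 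In any monoid an element commuting with a unit commutes with the inverse of that unit; hence $ece$ commutes with $y$ in $eSe$, that is $y\,ece=ece\,y$. Simplifying with $ece=ce=ec$ and $ey=ye=y$ turns this into $yc=cy$, i.e. $c\,x\grp=x\grp\,c$.

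\emph{Main obstacle.} The only point requiring care is Step 1 — showing that commuting with $x$ forces commuting with the idempotent $e=xx\grp$; once that is in hand, the reduction to the group-of-units situation inside $eSe$ is routine, and the top-level reduction uses only the explicit formulas for $a^{\parallel d}$ supplied by Theorem \ref{thexist}. If one prefers to avoid the corner $eSe$ altogether, Step 2 can be replaced by the direct computation $yc=yc(xy)=y(cx)y=y(xc)y=(yx)cy=ecy=cy$, where the outer equalities again use Step 1.
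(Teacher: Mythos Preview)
Your argument is correct. The reduction to the classical fact that the group inverse bicommutes (i.e.\ $cx=xc\Rightarrow cx\grp=x\grp c$) is sound, and your proof of that fact is clean: Step~1 is the only delicate point, and the chain $ce=xcy=ece=ycx=ec$ (using $ex=xe=x$) is valid; Step~2 then follows either via the corner monoid $eSe$ or by the direct computation you give, both of which rely on Step~1 exactly where you indicate.

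As for comparison: the present paper does not actually reprove Theorem~\ref{commut} but merely cites it from~\cite{mary}. Your write-up therefore supplies what the paper omits, and does so by the standard route (reduce $a^{\parallel d}=d(ad)\grp$ to bicommutation of the group inverse of $ad$). This is essentially the argument one would expect; nothing exotic is needed beyond the formulas of Theorem~\ref{thexist}.
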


As a direct corollary, we get:
\begin{corollary}\label{corcommut}
Let $a,d\in S$, $dad\cH d$ and pose $b=a\dd$. If $ad=da$, then
$ab=ba$ and $bd=db$. 
\end{corollary}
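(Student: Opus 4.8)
The plan is to obtain this as a purely formal consequence of Theorems \ref{thexist} and \ref{commut}. First, since $dad\cH d$, item (4) of Theorem \ref{thexist} tells us that $a$ is invertible along $d$, so $b=a\dd$ is a well-defined element of $S$. In particular the hypotheses of Theorem \ref{commut} are met, and with $A=\{a,d\}$ it yields $b\in A''$, the bicommutant of $\{a,d\}$ in $S$.

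Next I would record the elementary observation that, under the extra hypothesis $ad=da$, both generators already lie in the (single) commutant: $a\in A'$ since $a$ commutes with $a$ and, by hypothesis, with $d$; and likewise $d\in A'$ for the same reason. This is the only place the commutativity assumption is used.

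Finally, unwinding $A''=(A')'$: since $b\in A''$ it commutes with every element of $A'$, and in particular with $a$ and with $d$. Hence $ab=ba$ and $bd=db$, which is exactly the assertion. (Alternatively one could argue directly from the formula $b=d(ad)^{\sharp}=(da)^{\sharp}d$ of Theorem \ref{thexist} together with Lemma \ref{lemma_Clifford} applied to $ad$ and $da$, but the bicommutant route is shorter and cleaner.)

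There is no genuine obstacle here; the only care needed is the bookkeeping of commutants — verifying $\{a,d\}\subseteq A'$ when $ad=da$ and keeping straight the definition $A''=(A')'$ — after which the conclusion is immediate from the bicommutation property established in Theorem \ref{commut}.
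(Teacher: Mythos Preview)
Your proof is correct and is exactly the argument the paper has in mind: it presents the corollary as an immediate consequence of Theorem \ref{commut}, and your derivation via $b\in A''$ together with $\{a,d\}\subset A'$ when $ad=da$ is precisely that. The only addition you make is the explicit appeal to Theorem \ref{thexist}(4) to justify existence of $a\dd$, which the paper leaves implicit.
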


%It follows that the inverse of $a$ along $d\in \{a\}''$, if it exists, is also in $\{a\}''$. As a consequence, and since $a\in \{a\}''$, we can work in the commutative subsemigroup $\{a\}''$ to find outer inverses in the bicommutant.

%\begin{lemma}   
%\begin{enumerate}
%\item Suppose $ad=da$. Then $a\in S$ is invertible along $d\in S$ if and only if $ad \cH d^2\cH d$.  In this case, $a\dd a=a a\dd$.
%\item Suppose $bab=b$, $ba=ab$. Then $b=a^{\parallel e}$, with $e=ab\in \{a\}'$.
%\end{enumerate}
%\end{lemma}

%The proof follows from the characterization of invertibility in terms of group inverses. $a^{\parallel d}$ exists $\iff$ $ad\cL d$ and $\cH_{ad}$ is a group  $\iff$ $da\cR d$ and $\cH_{da}$ is a group $\iff$ $\cH_d=\cH_{ad}$ is a group since $ad=da$.

We define the following sets:
\ben
\item $\Sigma_0(a)=\{e\in E(S),\; eae\cH e\}$;
\item $\Sigma_1(a)=\{a\}'\cap \Sigma_0(a)$;
\item $\Sigma_2(a)=\{a\}''\cap \Sigma_0(a)$.
\een

(If $S$ is commutative, or the idempotents are central, then the three sets are equal. We then simply denote it $\Sigma(a)$.)

\begin{lemma}
Let $e\in E(S)$ and $a\in S$ such that $ae=ea$. Then $e\in \Sigma_0(a)\iff e\leq_{\cH} a$.
\end{lemma}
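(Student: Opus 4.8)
The plan is to reduce the asserted equivalence to a short transitivity argument for $\leq_{\cH}$, after first replacing the "sandwich" $eae$ by the simpler product $ea$.

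\emph{Step 1 (simplify $eae$).} Since $e^2=e$ and $ae=ea$, a one-line computation gives $eae=e(ae)=e(ea)=e^2a=ea$ (equivalently $eae=(ea)e=(ae)e=ae^2=ae=ea$). Hence $e\in\Sigma_0(a)$ if and only if $ea\,\cH\,e$.

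\emph{Step 2 (two free domination facts).} Writing $ea=e\cdot a$ and $ea=ae=a\cdot e$ (here commutativity is used), I would note that $ea\leq_{\cH}e$ and $ea\leq_{\cH}a$ both hold automatically, with each of the $\leq_{\cL}$ and $\leq_{\cR}$ parts immediate. Since $ea\leq_{\cH}e$ always holds, the condition $ea\,\cH\,e$ collapses to the single requirement $e\leq_{\cH}ea$. So the lemma reduces to showing $e\leq_{\cH}ea\iff e\leq_{\cH}a$.

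\emph{Step 3 (the two implications).} For $(\Rightarrow)$ I would simply chain $e\leq_{\cH}ea\leq_{\cH}a$ using transitivity of $\leq_{\cH}$ and the fact $ea\leq_{\cH}a$ from Step 2. For $(\Leftarrow)$, write $e=xa=ay$ with $x,y\in S^1$; then $e=e^2=(xa)e=x(ae)=x(ea)$ gives $e\leq_{\cL}ea$, and $e=e^2=e(ay)=(ea)y$ gives $e\leq_{\cR}ea$, hence $e\leq_{\cH}ea$. Combining the steps yields the equivalence.

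I do not expect a genuine obstacle; the only points requiring care are systematically exploiting $ae=ea$ to move $e$ past $a$ on either side, and observing that idempotency $e=e^2$ is precisely what powers the $(\Leftarrow)$ direction — so this cannot be a formal consequence of properties of $\leq_{\cH}$ alone. An alternative route is to invoke Theorem~\ref{thexist}(4) with $d=e$ (so $e\in\Sigma_0(a)$ says $a$ is invertible along $e$) together with Corollary~\ref{corcommut}, but the direct argument above is shorter and avoids passing through the inverse along $e$.
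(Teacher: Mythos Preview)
Your proof is correct and follows essentially the same approach as the paper: both arguments exploit $e=e^2$ and $ae=ea$ to show $e\cH eae\iff e\leq_{\cH}a$ via the intermediate product $ea$, using transitivity of $\leq_{\cH}$. Your presentation is slightly more streamlined in that you simplify $eae=ea$ at the outset, whereas the paper carries $eae$ a bit longer before collapsing it, but the underlying idea is identical.
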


\proof
Assume $e\in \Sigma_0(a)$. Then $e\leq_{\cH} eae=ea=ae\leq_{\cH} a$. Conversely, if $e\leq_{\cH} a$ and $ae=ea$, then $e\leq_{\cR} a\Rightarrow e=ee \leq_{\cR} ea \leq_{\cR} e$ that is $e\cR ea$. But $ea=ae$, hence $e\cR ea\Rightarrow e\cR ae\Rightarrow e=ee\cR eae$. By symmetry, we get $e\cH eae$. 
\endproof
 
Combining the previous lemmas and theorems we get:
\begin{theorem}
\fonction{\tau_a}{W(a)}{E(S)}{x}{ax} 
\begin{itemize}
\item is one to one from $W(a)\cap \{a\}'$ onto $\Sigma_1(a)$;
\item is one to one from $W(a)\cap \{a\}''$ onto $\Sigma_2(a)$.
\end{itemize}  
Its reciprocal $\tau_a^{-1}$ associates $e$ to $b=a^{\parallel e}$.
\end{theorem}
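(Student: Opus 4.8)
The plan is to prove, in order, the following four facts, which together yield both bijections: (i) $\tau_a$ is well defined; (ii) it carries $W(a)\cap\{a\}'$ into $\Sigma_1(a)$ and $W(a)\cap\{a\}''$ into $\Sigma_2(a)$; (iii) it is injective already on the larger set $W(a)\cap\{a\}'$; (iv) it is onto each $\Sigma_i(a)$, with $\tau_a^{-1}(e)=a^{\parallel e}$. For (i), if $x\in W(a)$ then $(ax)^2=a(xax)=ax$, so $ax\in E(S)$. For (ii), given $x\in W(a)\cap\{a\}'$ put $e=ax=xa$; then $ae=ea$ because $ax=xa$, and $e=ax\leq_\cR a$, $e=xa\leq_\cL a$ give $e\leq_\cH a$, so $e\in\Sigma_0(a)$ by the preceding lemma and hence $e\in\Sigma_1(a)$. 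If moreover $x\in\{a\}''$, then every $c\in\{a\}'$ commutes with $a$ and with $x$, so $ce=cax=axc=ec$, whence $e\in\{a\}''$ and $e\in\Sigma_2(a)$.

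For (iii): if $x,y\in W(a)\cap\{a\}'$ satisfy $ax=ay$, then also $xa=ax=ay=ya$, hence $x=xax=(xa)y=(ya)y=yay=y$. For (iv), fix $e\in\Sigma_1(a)$ (resp. $e\in\Sigma_2(a)$); in particular $eae\cH e$ and $ae=ea$. By Theorem~\ref{thexist} the inverse $b:=a^{\parallel e}$ exists, and by the characterization of the inverse along an element $b\in W(a)$ with $b\leq_\cH e$, which forces $be=eb=b$. Since $ae=ea$, Corollary~\ref{corcommut} gives $ab=ba$, so $b\in\{a\}'$; in the bicommutant case, Theorem~\ref{commut} gives $b\in(a,e)''$, and since $e\in\{a\}''$ makes $\{a,e\}'=\{a\}'$, in fact $b\in\{a\}''$. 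Finally $ab=a(eb)=(ae)b=(ea)b=e(ab)$ while $eab=e$ by the defining identities of $a^{\parallel e}$, so $ab=e(ab)=e$; thus $\tau_a(b)=e$, which proves surjectivity and, combined with (iii), identifies $\tau_a^{-1}(e)=a^{\parallel e}$.

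The genuinely delicate part is not any single computation but the commutant bookkeeping: checking that $e=ax$ lands in $\{a\}'$, respectively $\{a\}''$, and, dually, that $a^{\parallel e}$ inherits commutation with $a$ (Corollary~\ref{corcommut}) and, in the stronger case, membership in $\{a\}''$, the latter coming from Theorem~\ref{commut} together with the elementary fact that $\{a,e\}'=\{a\}'$ whenever $e\in\{a\}''$. Everything else is a routine unwinding of the identities $xax=x$ and $bae=e=eab$.
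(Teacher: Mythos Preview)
Your proof is correct and follows essentially the same route as the paper's: injectivity by the cancellation $x=xax=xay=yay=y$, surjectivity by taking $b=a^{\parallel e}$ and invoking Theorem~\ref{commut} (together with $\{a,e\}'=\{a\}'$ when $e\in\{a\}''$) for the commutant properties, and the verification $\tau_a(b)=e$ via $b=eb$ and $eab=e$. The only cosmetic differences are that you appeal directly to $eae\cH e$ and Theorem~\ref{thexist} for the existence of $a^{\parallel e}$ (whereas the paper first argues $ae\cH e$), and you compute $ab=aeb=eab=e$ instead of the paper's $ab=abe=bae=e$; both are equivalent one-line manipulations.
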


\proof
Let $b,c\in W(a)\cap \{a\}'$. Then $ab=ac\Rightarrow b=bab=bac$. But also $ba=ca$ by commutativity and $bac=cac=c$. Finally $b=c$. Obviously, $ab=ba=e$ is an idempotent commuting with $a$. Conversely, if $e\in \Sigma_1(a)$, then $e\leq_{\cR} a\Rightarrow e=ee \leq_{\cR} ea=ae\leq_{\cR} e$. Also $e\leq_{\cL} a\Rightarrow e=ee \leq_{\cR} ae=ea\leq_{\cL} e$. It follows that $ea=ea\cH e$, $a$ is invertible along $e$. Pose $b=a^{\parallel e}$. Then $b\in \{a,e\}''$ hence $ab=ba$ and $ab=abe=bae=e$.\\
For the second statement, we have only to prove that $\tau_a$ maps $W(a)\cap \{a\}''$ onto $\Sigma_2(a)$, but this follows from
theorem \ref{commut}. 
\endproof

As a consequence, looking for commuting or bicommuting outer inverses can be handled through idempotents.\\

Recall that any set of idempotents may be partially ordered by $e\leq f \iff ef=fe=e$, the natural partial order, and if this set is commutative, then this partial order is compatible with multiplication.
%$$t\leq s \iff \exists x,y\in T^1,\; xs=xt=t=ty=ts.$$
%and that this partial order reduces to 
We then have two partial orders on $E(S)$, the natural partial order and the $\cH$ preorder (that reduces to a partial order for idempotents since a $\cH$-class contains at most one idempotent \cite{Green51}). Actually, they coincide for idempotents. If $e\leq f$, then $e=ef=fe$ and $e\leq_{\cH} f$ and conversely, if $e=fx=yf$ then $fe=ffx=fx=e=yf=yff=ef$.\\

It is interesting to notice that even in the noncommutative case, invertibilty along an idempotent $e$ can be expressed as invertibity in the corner semigroup $eSe$.
\begin{lemma}
Let $a\in S, \; e\in E(S)$. Then $e\in \Sigma_0(a)$ ($a^{\parallel e}$ exists) if and only if $eae$ is invertible in the corner monoid $eSe$.
In this case
 $$a^{\parallel e}=(ea)\grp e=e (ae)\grp=(eae)\grp=(eae)^{-1}.$$
\end{lemma}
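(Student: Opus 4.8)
The plan is to derive the existence statement directly from Theorem~\ref{thexist} and to identify $a^{\parallel e}$ with the various inverses by a short computation using the absorption properties of the idempotent $e$. First I would record that $eae \leq_{\cH} e$ always holds, since $eae = (ea)e$ exhibits $eae \leq_{\cL} e$ and $eae = e(ae)$ exhibits $eae \leq_{\cR} e$; consequently $eae \cH e$ is equivalent to $e \leq_{\cH} eae$. Applying Theorem~\ref{thexist} with $d = e$ --- so that $dad = eae$ --- the equivalence of its conditions (1) and (4) yields at once that $a^{\parallel e}$ exists if and only if $eae \cH e$, i.e.\ if and only if $e \in \Sigma_0(a)$.

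Next I would treat the equivalence with invertibility of $eae$ in the corner monoid $eSe$, whose identity element is $e$. For the forward implication, put $b = a^{\parallel e}$; from $b \leq_{\cH} e$ and $e^2 = e$ one gets $eb = b = be$, so $b \in eSe$, and then the defining identities $eab = e = bae$ give $(eae)b = ea(eb) = eab = e$ and $b(eae) = (be)ae = bae = e$, so $b$ is a (two-sided) inverse of $eae$ in $eSe$. For the converse, suppose $c \in eSe$ satisfies $(eae)c = e = c(eae)$; from $c = ece$ we get $ec = c = ce$, hence $eac = (eae)c = e$ and $cae = c(eae) = e$, while $c = (ec)e$ and $c = e(ce)$ show $c \leq_{\cH} e$; by definition this means $a$ is invertible along $e$ with $a^{\parallel e} = c$.

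Finally, for the chain of formulas: the forward computation above exhibits $a^{\parallel e}$ as the inverse of $eae$ in $eSe$, which (invertibility in a monoid forcing a unique, two-sided, commuting inverse) is also the group inverse of $eae$ in $S$; thus $a^{\parallel e} = (eae)^{-1} = (eae)\grp$. The remaining identities $a^{\parallel e} = (ea)\grp e = e(ae)\grp$ are precisely the expression $b = (da)\grp d = d(ad)\grp$ provided by Theorem~\ref{thexist} specialized to $d = e$, whose conditions (2) and (3) also guarantee that $(ea)\grp$ and $(ae)\grp$ exist. The main obstacle is purely organizational: one must be careful that ``invertible in $eSe$'' refers to the monoid with identity $e$ and must consistently use the absorption $eb = be = b$ (respectively $ec = ce = c$) so that the corner products collapse correctly; once Theorem~\ref{thexist} is available there is no genuine difficulty.
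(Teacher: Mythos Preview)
Your argument is correct and follows essentially the same route as the paper: both directions of the equivalence are handled by the absorption identities $eb=be=b$ for $b\leq_{\cH} e$ (respectively $b\in eSe$) to pass between $bae=e=eab$ and $b(eae)=e=(eae)b$. Your first paragraph is redundant (the equivalence $a^{\parallel e}$ exists $\Leftrightarrow e\in\Sigma_0(a)$ is just the definition of $\Sigma_0(a)$ combined with Theorem~\ref{thexist}(1)$\Leftrightarrow$(4)), and your justification of the formula chain via Theorem~\ref{thexist} is more explicit than the paper's, which simply records the identities without proof.
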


\proof
Assume $a\de$ exists. Then $a\de \cH e$ hence $a\de=e a\de=a\de e=e a\de e\in eSe$. It also satisfies $a\de a e=e = e a a\de$ hence $a\de (eae)=e=(eae)a\de$ and $eae$ is invertible in the monoid $eSe$ (with unit $e$).\\
Conversely, assume $eae$ is invertible in $eSe$ with inverse $b\in eSe$. Then $b\leq_{\cH} e$ and $bae=b(eae)=e=(eae)b=eab$ and $b$ is the inverse of $a$ along $e$.
\endproof

Finally, note that $\Sigma_2(a)$ is a commutative band (commutative semigroup of idempotents, semillatice with $e\vee f=ef=fe$).

\begin{proposition}
$\Sigma_2(a)$ is a commutative subsemigroup of $S$.
\end{proposition}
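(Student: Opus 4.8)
The plan is to check the two assertions of the proposition — that $\Sigma_2(a)$ is closed under the multiplication of $S$, and that this multiplication is commutative on it — by reducing everything to the lemma above that identifies $\Sigma_0(a)$, among idempotents commuting with $a$, with the idempotents satisfying $e\leq_\cH a$.

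First I would record the elementary fact that $a\in\{a\}'$, so that every element of $\{a\}''$ commutes with $a$; hence $\{a\}''\subseteq\{a\}'$. From this, for $e,f\in\{a\}''$ we get $f\in\{a\}'$ and therefore $ef=fe$, since $e$ commutes with all of $\{a\}'$. Moreover the standard commutant computation $y_1y_2x=y_1xy_2=xy_1y_2$ for $x\in\{a\}'$ shows $\{a\}''$ is a subsemigroup of $S$. Thus $\{a\}''$ is already a commutative subsemigroup, which settles commutativity and also shows that for $e,f\in\Sigma_2(a)$ the product $ef=fe$ lies in $\{a\}''$ and, being a product of two commuting idempotents, is itself an idempotent.

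Next I would show $ef\in\Sigma_0(a)$. Since $ef\in\{a\}''\subseteq\{a\}'$, the idempotent $ef$ commutes with $a$, so by the lemma it suffices to prove $ef\leq_\cH a$. Applying the lemma to $e$ (which commutes with $a$ and belongs to $\Sigma_0(a)$) gives $e\leq_\cH a$; and from $ef=e\cdot f$ and $ef=fe=f\cdot e$ one reads off $ef\leq_\cR e$ and $ef\leq_\cL e$, i.e. $ef\leq_\cH e$. Transitivity of $\leq_\cH$ then gives $ef\leq_\cH a$, hence $ef\in\Sigma_0(a)$ by the lemma, so $ef\in\{a\}''\cap\Sigma_0(a)=\Sigma_2(a)$, as required.

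I do not expect a genuine obstacle here; the only two moves that deserve a moment's thought are the inclusion $\{a\}''\subseteq\{a\}'$, which is what delivers commutativity and also lets the $\Sigma_0$-lemma be applied to the product $ef$, and the use of that lemma to trade the condition $eae\cH e$ for the order comparison $e\leq_\cH a$, after which the statement collapses to the transitivity of $\leq_\cH$ together with the trivial comparison $ef\leq_\cH e$.
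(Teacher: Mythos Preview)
Your proof is correct and follows essentially the same route as the paper's: both use the lemma to replace the condition $eae\cH e$ by $e\leq_\cH a$, obtain $ef=fe$ from membership in $\{a\}''$, and conclude via $ef\leq_\cH e\leq_\cH a$. You are simply more explicit than the paper about why $\{a\}''\subseteq\{a\}'$ (hence commutativity) and why $\{a\}''$ is closed under multiplication, points the paper leaves implicit.
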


\proof
If $e,f\in \Sigma_2(a)$, then $ef=fe\leq_{\cH} e\leq_{\cH} a$. We have to show that $ef$ is an idempotent. $efef=effe=efe=eef=ef$ and $ef$ is an idempotent.
\endproof

\section{The natural generalized inverse in a semigroup}

\subsection{Definition and first properties}

\begin{definition}
Let $S$ be a semigroup, $a\in S$. 
\ben
\item
Let $j=0,1,2$. The element $a$ is $j-$maximally invertible if the set $\Sigma_j(a)$ admits maximal elements for the natural partial order. Elements $b=a^{\parallel e}$ where $e$ is maximal are then called $j-$maximal generalized inverses of $a$.
\item If there exists a greatest element $M\in \Sigma_j(a)$, then we say that $a$ is $j-$naturally invertible, and $b=a^{\parallel M}$ is called the $j-$natural (generalized) inverse of $a$.  
\item Finally, if $a$ is $2-$naturally invertible, the element $aM=aba$ is called the core of $a$.
\een
\end{definition}

We will mainly deal with the $2-$natural inverse in the sequel, and we will also refer to it as \textit{the natural inverse}. As noted before, if $S$ is commutative or the idempotents central then the three notions coincide.\\
  
Recall that a semillatice is distributive if $e\vee f\leq x$ implies the existence of $e',f'$ such that $e\leq e',\; f\leq f'$ and $x = e' \vee f'$ .

\begin{proposition}
If the semillatice $\Sigma_2(a)$ is distributive, then any $2-$maximally invertible element is naturally invertible.
\end{proposition}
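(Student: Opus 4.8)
The statement to prove is: if the semilattice $\Sigma_2(a)$ is distributive, then any $2$-maximally invertible element is naturally invertible. Recall that $\Sigma_2(a)$ is a commutative band (a meet-semilattice under $e\vee f = ef = fe$, with the natural partial order $e\le f \iff ef = e$), and that $a$ is $2$-maximally invertible means $\Sigma_2(a)$ has a maximal element, while $2$-naturally invertible means $\Sigma_2(a)$ has a greatest element. So the task is purely order-theoretic: in a distributive semilattice, the existence of a maximal element forces the existence of a greatest element.

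**Key steps.** Let $M \in \Sigma_2(a)$ be maximal. I want to show $M$ is in fact the greatest element, i.e. $f \le M$ for every $f \in \Sigma_2(a)$. Take an arbitrary $f \in \Sigma_2(a)$ and consider the element $M \vee f = Mf \in \Sigma_2(a)$ (this lies in $\Sigma_2(a)$ by the Proposition that $\Sigma_2(a)$ is a subsemigroup). In the semilattice, $M \le M \vee f$ and $f \le M \vee f$. Now apply distributivity with the inequality $M \vee f \le M \vee f =: x$: there exist $M', f' \in \Sigma_2(a)$ with $M \le M'$, $f \le f'$, and $x = M' \vee f'$. Since $M$ is maximal and $M \le M'$, we get $M' = M$. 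Hence $x = M \vee f' $. But also $f \le f' \le M \vee f' = x = M \vee f$, so $f \le M \vee f$ — which we already knew — and more to the point $x = M \vee f'$ with $f' \ge f$; I then need to extract $M \vee f \le M$. Let me redo this cleanly: from $x = M\vee f'$ and $M' = M$ we have $M \vee f = M \vee f'$; combined with $f \le f'$ this does not immediately collapse, so I will instead re-examine which inequality to feed to distributivity.

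**The right application of distributivity.** The correct move is: apply the distributive law to $M \le M \vee f$ but in the form needed — actually the cleanest route uses the definition with $e \vee f \le x$ taken as $M \vee f \le M \vee f$ and notes that any decomposition $x = M' \vee f'$ with $M \le M'$ forces (by maximality of $M$) $M' = M$, hence $M \vee f = M \vee f' \ge f'$, so $f' \le M \vee f$; but also $f \le f'$ gives nothing new. The genuinely useful consequence is different: I should instead argue that maximality of $M$ together with $M \le M \vee f$ directly yields $M = M \vee f$, hence $f \le M$. To get $M \le M \vee f \implies M = M\vee f$ I only need that $M \vee f \in \Sigma_2(a)$ and $M$ is maximal — distributivity is not even needed for this step! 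So the real content must be subtler: without distributivity $\Sigma_2(a)$ is already a semilattice and maximality of $M$ already gives $M = M \vee f$ for all $f$...

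**Re-examination and the actual obstacle.** The resolution is that in the definition here, $\Sigma_2(a)$ is only a meet-semilattice (joins $e\vee f = ef$ always exist since it is a subsemigroup), so indeed a maximal element is automatically the greatest and distributivity is vacuous — unless the intended reading is that the partial order is the \emph{opposite} one, i.e. $e \vee f = ef$ is actually the \emph{meet} and the semilattice has no binary joins, so that "$e\vee f \le x$" in the distributivity hypothesis refers to the meet. Under that reading $\Sigma_2(a)$ is a \emph{join}-free structure and a maximal element need not be greatest. So the plan is: interpret the order so that $ef$ is the meet $e \wedge f$; given maximal $M$ and arbitrary $f$, I do \emph{not} have $M \vee f$ available; instead I use distributivity applied to $M \wedge f \le f$... and I chase the resulting decomposition to produce an element strictly above $M$ unless $f \le M$. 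I will write: suppose for contradiction $f \not\le M$; use distributivity on $Mf = M \wedge f \le M$ to obtain $M', f'$ with appropriate inequalities, then combine with maximality of $M$ and commutativity to reach $M = M \vee (\text{something} > M)$, a contradiction; conclude $f \le M$, so $M$ is greatest, hence $a$ is $2$-naturally invertible with natural inverse $a^{\|M}$. The main obstacle is pinning down exactly which order convention makes the statement non-vacuous and then performing the short order-theoretic contradiction argument correctly; once the convention is fixed, the computation is a two-line chase in the semilattice using only commutativity of idempotents in $\Sigma_2(a)$ and the definition of maximality.
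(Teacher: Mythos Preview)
Your proposal is not a proof; it is an exploration that eventually lands on the right starting point but never executes the argument, and the final sketch is garbled.

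You are right that in the natural partial order on idempotents ($e\le f\iff ef=fe=e$) the product $ef$ is the \emph{meet}, so $\Sigma_2(a)$ is a meet-semilattice in which a maximal element need not be greatest, and you are right that the inequality to feed into the distributivity hypothesis is $Mf\le M$. But your conclusion ``reach $M=M\vee(\text{something}>M)$, a contradiction'' makes no sense in this convention: here $\vee$ denotes the product (meet), and the meet of $M$ with anything above $M$ is just $M$, which is no contradiction at all. So the argument you outline does not close.

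The paper's proof is direct and uses maximality \emph{twice}, which is the step you are missing. From $Mf\le M$ (with $M$ maximal and $f\in\Sigma_2(a)$ arbitrary), distributivity gives $M',f'\in\Sigma_2(a)$ with $M\le M'$, $f\le f'$, and $M=M'f'$. Maximality of $M$ forces $M'=M$, hence $M=Mf'$, i.e.\ $M\le f'$. Now apply maximality of $M$ a second time to conclude $f'=M$, whence $f\le f'=M$. Thus $M$ is the greatest element of $\Sigma_2(a)$ and $a$ is naturally invertible with natural inverse $a^{\parallel M}$. Your plan had the first application of maximality implicitly, but not the second; once you see that $M\le f'$ and $f'\in\Sigma_2(a)$, the conclusion is immediate and no contradiction argument is needed.
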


\proof
let $e$ be a maximal element of $\Sigma_2(a)$, $f\in \Sigma_2(a)$. Then $ef=fe\leq e$ and exists  $e',f'$ such that $e\leq e',\; f\leq f'$ and $e = e' f'$. By maximality, $e'=e$ and we get $e=ef'=f'e$. It follows that $e\leq f'$ hence $e=f'$ and $f\leq e$. $e$ is the greatest element in $\Sigma_2(a)$.
\endproof

The natural inverse generalizes the Drazin inverse \cite{Drazin58}.
\begin{theorem}
Assume $a$ is Drazin invertible with inverse $a^D$. Then $a$ is $1$ and $2-$naturally invertible with inverse $a^{\parallel M}=a^D$.
\end{theorem}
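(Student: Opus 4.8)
The plan is to exhibit the greatest idempotent of $\Sigma_2(a)$ explicitly: I claim it is $M:=aa^D=a^Da$. First I would record the basic facts about $M$. It is idempotent, since $M^2=aa^Daa^D=a(a^Daa^D)=aa^D=M$. Moreover $M\in\{a\}''$: the Drazin inverse satisfies $a^D\in\{a\}''$ (this is classical; within the present framework one may see it by writing $a^D=a^{k-1}(a^k)\grp$ with $k$ an index of $a$, noting $(a^k)\grp=(a^k)^{\parallel a^k}\in\{a^k\}''$ by Theorem~\ref{commut}, so that any $c$ commuting with $a$ commutes with $a^{k-1}$ and with $(a^k)\grp$, hence with $a^D$), and then $M=aa^D$ commutes with everything that commutes with $a$; in particular $aM=Ma$.

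Next I would check $M\in\Sigma_2(a)$ and identify $a^{\parallel M}$. Since $aM=Ma$, one has $MaM=M^2a=Ma=a^2a^D$, and a short manipulation of Green's preorders gives $a^2a^D\ \cH\ aa^D=M$: from $a^2a^D=a\cdot M=M\cdot a$ one gets $MaM\leq_\cH M$, and from $M=a^D\cdot(a^2a^D)=(a^2a^D)\cdot a^D$ (using $a(a^D)^2=a^D$) one gets $M\leq_\cH MaM$. Thus $M\in\Sigma_0(a)$, and as $M\in\{a\}''$ in fact $M\in\Sigma_2(a)\subseteq\Sigma_1(a)$, so $a^{\parallel M}$ exists. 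To see $a^{\parallel M}=a^D$ I would simply verify the defining relations: since $a^Da=aa^D=M$ we have $a^DaM=M^2=M$ and $Maa^D=M^2=M$, and from $a^Daa^D=a^D$ we have $a^D=a^DM=Ma^D$, so $a^D\leq_\cH M$; uniqueness then forces $a^{\parallel M}=a^D$ (and the core of $a$ is $aM=a^2a^D$).

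It remains to prove $M$ is the greatest element of $\Sigma_1(a)$; since $M\in\Sigma_2(a)\subseteq\Sigma_1(a)$ this simultaneously makes $M$ the greatest element of $\Sigma_2(a)$, so that $a$ is both $1$- and $2$-naturally invertible with natural inverse $a^{\parallel M}=a^D$. Let $e\in\Sigma_1(a)$: $e\in E(S)$, $ea=ae$, $eae\ \cH\ e$. Because $e$ commutes with $a$, $eae=ea$, so $ea\ \cH\ e$. The class $\cH_e$ is a group (it contains the idempotent $e$; cf. Lemma~\ref{lemma_Clifford}) and contains $ea$, hence contains $(ea)^m=ea^m$ for all $m\geq1$, so $ea^m\ \cH\ e$. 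Fix $m\geq1$ at least an index of $a$, so that $a^mM=a^{m+1}a^D=a^m=Ma^m$. From $e\leq_\cL ea^m$ write $e=va^me$ with $v\in S^1$; then, using that $e$ commutes with $M$ (shown above) and with $a^m$,
$$eM=va^meM=va^mMe=va^me=e,$$
and symmetrically, starting from $e\leq_\cR ea^m$, $Me=e$. Hence $e\leq M$, as required.

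The step I expect to be the main obstacle is the last one — running the hypothesis ``$e$ commutes with $a$ and $eae\ \cH\ e$'' forward to ``$e\leq aa^D$''. This combines the group structure of $\cH_e$ (to upgrade $ea\ \cH\ e$ to $ea^m\ \cH\ e$ for a high enough power) with the nilpotency-type identity $a^mM=a^m$ carried by the Drazin inverse and with $M\in\{a\}''$; the remaining computations are routine verifications in $S$.
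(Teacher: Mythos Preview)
Your proof is correct and follows essentially the same strategy as the paper: set $M=aa^D$, check $M\in\Sigma_2(a)$, and then show that every $f\in\Sigma_1(a)$ lies below $M$ by exploiting the index identity $a^mM=a^m$ together with a factorisation of $f$ through a high power of $a$. The only cosmetic difference is in how that factorisation is produced: the paper uses the explicit outer inverse, writing $f=f(a^{\parallel f})^{n+1}a^{n+1}$, whereas you pass through the group structure of $\cH_e$ to get $ea^m\,\cH\,e$ and then pull the factor $v$ out of $e\leq_\cL ea^m$; these are two phrasings of the same computation (indeed $v=(a^{\parallel e})^m e$ works).
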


\proof
Let $a$ be Drazin invertible with index $n$ and inverse $a^D$. Then $e=aa^D=a^Da\in \Sigma_2(a)\subset \Sigma_1(a)$, and $a^D a^{n+1}= a^{n+1} a^D= a^n$.
Let $f\in \Sigma_1(a)$. Then $a^{\parallel f}$ satisfies $a^{\parallel f}af=f=a a^{\parallel f} f=faa^{\parallel f}=fa^{\parallel f}a.$ It follows that $f=f\left(a^{\parallel f}\right)^{n+1} a^{n+1}$. Then $fe=f\left(a^{\parallel f}\right)^{n+1} a^{n+1} a^D a=f$. Also $fe=ef$ ($e=aa^D=a^Da\in \Sigma_2(a)$) hence $f\leq e$.
\endproof

%Actually, the idempotent $e=aa^D=a^Da$ is also the greatest element of $\Sigma_1$, since in the preceeding proof we only use the commutativity of $f$ with $a$. 
\subsection{Examples}

\underline{\textsc{Many Maximal inverses}.}\\

Let $S$ be the semigroup generated by three elements $e,f,a$ subject to the conditions $e=e^2=ea=ae$, $f=f^2=fa=af$ and $ef=fe$. Then $S$ is commutative, $a$ is maximally invertible but not naturally invertible, with two maximal inverses $a\de=e$ and $a^{\parallel f}=f$.\\

We consider now a simple variant of the previous example. Let $S'$ be the semigroup generated by three elements $e,f,a$ subject to the conditions $e=e^2=ea=ae$, $f=f^2=fa=af$ and $ef=e,\; fe=f$. Then $\Sigma_1(a)=\{e,f\}$ but $\Sigma_2(a)$ is empty since $e$ and $f$ do not commute.\\

\underline{\textsc{Right hereditary semigroups with central idempotents (\cite{Fountain77})}.}\\

In this example we notably show that elements of a right hereditary semigroup with central idempotents are naturally invertible, and describe the set $\Sigma(a)$.\\

Let $S$ be a right p.p. (principal projective) semigroup with central idempotents as defined in \cite{Fountain77}. Then $E(S)$ is a semillatice (for the natural partial order). For any $e\in E(S)$, define $Y_e=\{x\in S, xe=x \text{ and } xs=xt\Rightarrow es=et\}$ (that is the $\cL^*-$class of $e$ for the extended Green's relation $\cL^*$ \cite{Fountain82}). Then $Y_e$ is a cancellative monoid (with unit $e$) and the structure theorem of Fountain says that $S$ is the semillatice of these disjoints monoids.\\

If $S$ is right semi-hereditary then it is right p.p. and incomparable principal right ideals are disjoints\cite{Dorofeeva72}. It follows notably that $E(S)$ is a chain (any two idempotents are comparable), and maximal invertibility implies natural invertibility.\\

Let now $a\in S$. If $a$ is regular, then $a$ is group invertible hence naturally invertible. We assume in the sequel that $a$ is not regular. By centrality of the idempotents, $\Sigma_0(a)=\Sigma_1(a)=\Sigma_2(a)=\{e\in E(S), e\leq_{\cH} a\}$. 
Let $a^0\in E(S)$ be the idempotent such that $a\in Y_{a^0}$. Since $aa^0=a^0a=a$, any $e\leq_{\cH} a$ satisfies $e\leq a^0$ for the $\cH$ order hence the natural partial order, and since $a$ is not regular, $e<a^0$.
Conversely, let $e<a^0$ and assume $S$ is semi-hereditary. From $ae=ea\in eS\cap aS$, $eS$ and $aS$ are comparable, and from $e<a^0$ we get $eS\subset aS$ ($ae\in Y_e$ disjoint from $Y_{a^0}$ hence $ae\neq a$). It follows that $e\leq_{\cR} a$ and in particular $ea=ae\cR e$ is regular. Since for regular elements, the appartenance in $Y_e$ is simply Green's relation $\cL$, we get that $ae=ea\cH e$ and $e\leq_{\cH} a$. Finally, we have proved that $\Sigma(a)$ is the chain of idempotents $\{e\in E(S), e<a_0\}$. \\

If we finally assume that $S$ is right hereditary (right ideals are projective), then Dorofeeva \cite{Dorofeeva72} showed that $S$ satifisies the maximum condition for principal right ideals. As a consequence, the chain of idempotents $\{e\in E(S), e<a_0\}$ has a greatest element $M$ and $a$ is naturally invertible with inverse $a^{\parallel M}$.

\section{The ring case}

\subsection{Invertibility along an element in a ring}
In \cite{maryPatricio}, invertibilty along an element was characterized in terms of existence of units.

\begin{theorem}\label{thunit}
Let $d$ be a regular element of a ring $R$, $d^-\in A(d)$. Then the following are equivalent:
\ben
\item $a\dd$ exists.
\item $u=da+1-dd^-$ is a unit.
\item $v=ad+1-d^-d$ is a unit.
\een
In this case,
$$a\dd= u\io d = d v\io.$$
\end{theorem}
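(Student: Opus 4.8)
The plan is to exploit the characterization of invertibility along $d$ via the group inverse of $da$ (Theorem \ref{thexist}, item 2): $a\dd$ exists iff $d\leq_{\cR}da$ and $(da)\grp$ exists, and then $a\dd=(da)\grp d$. So I would first translate each of the three conditions into a statement purely about $da$ (or $ad$) and its invertibility relative to the idempotent $dd^-$ (resp.\ $d^-d$), and then invoke a standard ``unit = group invertible $\oplus$ complementary idempotent'' lemma. Concretely, write $e=dd^-$, so $e$ is an idempotent with $ed=d$, $de^-\!=$ well, $d^-e = d^-$ is not automatic, but $ed=d$ is. The element $u=da+1-e$ decomposes the ambient ring (as left/right modules) along $e$ and $1-e$; since $(1-e)d=0$ and $(da)(1-e)$ need not vanish, I'd instead look at $u$ acting on the Peirce pieces: on $(1-e)R$ it restricts to multiplication by $1-e$ (because $da\cdot(1-e)$ lands where? — here one must be careful), while the ``interesting'' block is $e(da)e$ inside the corner ring $eRe$.

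The cleaner route, which I'd actually carry out: show directly that $u=da+1-dd^-$ is a unit iff $da$ is group invertible with $dd^-\leq_{\cH}$ the associated idempotent $da(da)\grp$, using the elementary fact that for an idempotent $e$ and an element $x$ with $x=ex$ (here $x=da$, $e=dd^-$, and indeed $dd^-\cdot da=da$ since $dd^-d=d$), the element $x+1-e$ is a unit if and only if $x$ is invertible ``in the corner'' $eRe$ — equivalently $x\cH e$ — with inverse of the corner equal to $e u\io e$. This is the ring-theoretic shadow of the earlier Lemma stating that $a\de$ exists iff $eae$ is invertible in $eSe$. Combined with $d\leq_{\cR}da$ (which holds automatically once $da(da)\grp = dd^-$, since then $d=dd^-d=da(da)\grp d\leq_{\cR}da$), the equivalence $1\iff 2$ will drop out, and $3$ is the left--right dual via $ad$ and $d^-d$.

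For the final formula, once $u$ is a unit I need $a\dd=u\io d$. From $u=da+1-dd^-$ I get $ud=dad$, so $u\io(dad)=d$, i.e.\ $u\io da\cdot d = d$; setting $b=u\io d$ this reads $(da)b\cdot(\text{something})$ — more precisely I would verify the three defining equations $bad=d=dab$ and $b\leq_{\cH}d$ directly: $ud=dad\Rightarrow d=u\io dad = (u\io d)ad$ wait — better, $dab = da\,u\io d$; using $u\io u=1$ one has $da\, u\io = (u-1+dd^-)u\io = 1 - u\io + dd^- u\io$, and then multiplying on the right by $d$ and using $dd^-d=d$ should collapse to $d$. Similarly $bad=u\io dad = u\io(ud)=d$, which is immediate and is the easy half. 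Then $b=u\io d\leq_{\cR}d$ is clear, and $b\leq_{\cL}d$ follows because $b=u\io d = u\io dd^- d=(u\io d d^-)d$; the $\cH$-relation $b\cH d$ then comes from $d=dab\leq_{\cL}b$ and $d = ?\leq_{\cR} b$ — the latter from $bad=d$. Uniqueness of $a\dd$ then forces $a\dd=u\io d$, and symmetrically $a\dd = dv\io$.

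The main obstacle I anticipate is bookkeeping the one-sided Peirce decomposition carefully: $e=dd^-$ satisfies $ed=d$ and $e(da)=da$, but $d^-d$ is a genuinely different idempotent and $u$ does \emph{not} respect the $e$-decomposition on the right, so the ``corner ring'' argument must be set up on the correct side (left modules over $eRe$, or equivalently work with $eue=e(da)e+0$ and show $eue$ invertible in $eRe$ $\iff$ $u$ invertible in $R$, using that $(1-e)u=(1-e)$ because $(1-e)da=(1-e)dd^-da=0$... — wait, $(1-e)d = d - dd^-d = 0$, hence $(1-e)da=0$, so indeed $(1-e)u=(1-e)(1-e)=1-e$, good, $u$ is ``lower triangular'' with identity corner). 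That observation is the crux: $(1-e)d=0$ makes $u$ block-triangular, so $u$ is a unit iff its $eRe$-block $eue = e\,da\,e + e - e\cdot 0 = eda$ (note $eda\in eRe$ since $eda\,e = e\,da\,dd^- $, hmm need $da\cdot dd^- \in$ — not obviously in $eRe$). Resolving exactly which corner $da$ lives in, and hence stating the block-triangular claim correctly, is where the care is needed; everything after that is routine.
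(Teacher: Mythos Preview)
The paper does not prove this theorem: it is quoted from \cite{maryPatricio} and stated without argument, so there is no in-paper proof to compare your attempt against.

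On its own merits, your sketch of $2\Rightarrow 1$ and of the formula is essentially correct. From $ud=dad$ and $dd^-\,u=da$ one indeed gets $bad=d$ and $dab=d$ for $b=u^{-1}d$, and then $bab=u^{-1}(dau^{-1})d=u^{-1}dd^-d=b$. You do, however, swap $\cL$ and $\cR$: $b=u^{-1}d\in Rd$ gives $b\leq_{\cL}d$, not $b\leq_{\cR}d$, and your second argument ``$b=(u^{-1}dd^-)d$'' is the same fact again. The step that is genuinely missing is $b\leq_{\cR}d$. It follows from $(1-dd^-)u=1-dd^-$, which forces $(1-dd^-)u^{-1}=1-dd^-$, hence $(1-dd^-)b=(1-dd^-)d=0$ and $b=dd^-\,b\in dR$.

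The real gap is the converse $1\Rightarrow 2$. Your block-triangular heuristic is pointed in the right direction --- with $e=dd^-$ one has $(1-e)da=0$, so $u$ is upper triangular in the Peirce decomposition with $(1-e)$-corner equal to $1-e$ --- but you never actually produce an inverse for $u$ from the existence of $a^{\parallel d}$, and your final paragraph concedes the bookkeeping is unresolved. One way to finish: by Theorem~\ref{thexist} the group inverse $c=(da)^\#$ exists and $d\leq_{\cR}da$; setting $g=c\,da=da\,c$ one checks $eg=g$ and $ge=e$ (so $(g-e)^2=0$), and then
\[
u\,(c+1-g)=1+(g-e),\qquad (c+1-g)\,u=1+c(1-e),
\]
both of the form $1+\text{(square-zero)}$, hence units; thus $u$ is a unit. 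Alternatively, writing $u=1-d(d^--a)$ and $v=1-(d^--a)d$, Jacobson's lemma gives $2\Leftrightarrow 3$ in one stroke and halves the remaining work.
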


Note that in the particular case of invertibility along an idempotent $e$, thise reduces to:
\begin{corollary}
Let $e\in E(R)$ be a idempotent element of a ring $R$. Then the following are equivalent:
\ben
\item $a\de$ exists.
\item $u=ea+1-e$ is a unit.
\item $v=ae+1-e$ is a unit.
\een
In this case,
$$a\de= u\io e = e v\io.$$
\end{corollary}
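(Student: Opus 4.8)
The plan is to obtain this as a direct specialization of Theorem \ref{thunit} to the case $d=e$. The one preliminary remark is that every idempotent is a regular element of $R$, and in fact $e$ is its own inner inverse: since $eee=e$, we may take $d^-=e\in A(e)$. Having fixed this choice, the corollary reduces to a substitution.

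With $d=e$ and $d^-=e$ we get $dd^-=e$ and $d^-d=e$, so the units appearing in Theorem \ref{thunit} become $u=da+1-dd^-=ea+1-e$ and $v=ad+1-d^-d=ae+1-e$. Theorem \ref{thunit} then yields at once the equivalence of the three conditions ``$a\de$ exists'', ``$u=ea+1-e$ is a unit'', ``$v=ae+1-e$ is a unit'', and the formula $a\dd=u\io d=dv\io$ specializes to $a\de=u\io e=ev\io$. So the whole proof is: observe $e\in A(e)$, invoke Theorem \ref{thunit}, and simplify.

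There is essentially no obstacle here; the only point requiring a (trivial) word is that Theorem \ref{thunit} is stated relative to a fixed inner inverse $d^-$, so one must simply commit to the choice $d^-=e$ rather than argue independence of the choice. If desired, one can add a one-line consistency check against the earlier corner-monoid lemma: compressing $u$ to the corner gives $eue=e(ea+1-e)e=eae$, matching the fact that $a\de$ exists iff $eae$ is invertible in $eRe$.
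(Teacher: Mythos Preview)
Your proof is correct and is exactly the approach the paper intends: the corollary is stated immediately after Theorem \ref{thunit} with the remark that it is the particular case $d=e$, and your observation that $e\in A(e)$ (so one may take $d^-=e$) is precisely the missing line needed to carry out the substitution. The added corner-monoid consistency check is optional but accurate.
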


\begin{corollary}
If $ae=ea$, then $e\leq_{\cH} a$ if and only if $u=1+ae-e$ is a unit.
\end{corollary}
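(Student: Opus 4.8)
The statement to prove is: if $ae=ea$ (where $e$ is an idempotent), then $e \leq_{\cH} a$ if and only if $u = 1 + ae - e$ is a unit.

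This follows almost immediately from the preceding Corollary applied to the idempotent $e$. The plan is to identify $e \leq_{\cH} a$ with invertibility of $a$ along $e$, then use the characterization $u = ea + 1 - e$ is a unit. Since $ae = ea$, we have $ea + 1 - e = ae + 1 - e = 1 + ae - e = u$, so the two conditions match syntactically.

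First, I would invoke the Lemma that for $e \in E(S)$ with $ae = ea$, one has $e \in \Sigma_0(a)$ (equivalently $a^{\parallel e}$ exists) if and only if $e \leq_{\cH} a$. Second, by the preceding Corollary (invertibility along an idempotent), $a^{\parallel e}$ exists iff $u = ea + 1 - e$ is a unit. Third, the commutativity hypothesis $ae = ea$ gives $ea + 1 - e = 1 + ae - e$, so the unit condition is exactly "$1 + ae - e$ is a unit." Chaining these three equivalences yields the claim.

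There is essentially no obstacle here — the result is a one-line corollary obtained by substituting $ae = ea$ into the displayed formula of the previous corollary and recalling the characterization of $e \leq_{\cH} a$ via $\Sigma_0(a)$ for commuting idempotents. The only thing to be careful about is that one should note both $u = ea + 1 - e$ and $v = ae + 1 - e$ collapse to the same element $1 + ae - e$ under the commutativity hypothesis, so the three conditions of the previous corollary reduce to a single one; this is why the statement can be phrased with just one unit.
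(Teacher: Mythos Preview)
Your proposal is correct and matches the paper's intended argument: the paper states this result without proof, as an immediate corollary of the preceding characterization of $a^{\parallel e}$ via the unit $ea+1-e$, together with the earlier lemma that for a commuting idempotent $e$ one has $e\in\Sigma_0(a)\iff e\leq_{\cH} a$. Your chain of equivalences is exactly this implicit derivation, and your observation that $u$ and $v$ coincide under $ae=ea$ is the reason the statement reduces to a single unit condition.
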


Remark that a sufficient condition for this to happen is the following:

\begin{lemma}\label{lemmaK}
If $ae=ea$ and $a+1-e$ is a unit, then $e\leq_{\cH} a$.
\end{lemma}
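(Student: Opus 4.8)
The plan is to deduce this from the Corollary immediately above: since $ae=ea$, that Corollary asserts that $e\leq_{\cH}a$ holds exactly when $u=1+ae-e$ is a unit, and because $e$ and $a$ commute we have $eae=ae=ea$, so $u=1-e+eae$. Hence it suffices to build a two-sided inverse of $u$ out of the given inverse $w=(a+1-e)\io$.

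The key (and essentially the only) observation is that $e$ commutes with $w$: indeed $e$ commutes with $a$, with $1$ and with $e$, hence with $a+1-e$, hence with its inverse $w$, so $ew=we$. I would then multiply the identities $(a+1-e)w=1$ and $w(a+1-e)=1$ on the right by $e$; using $ew=we$ and $e^2=e$ the terms $we$ and $ewe$ (resp. $we$ and $wee$) cancel, leaving $awe=e$ and $wae=e$.

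With this in hand I claim $z=1-e+ewe$ is the inverse of $u=1-e+eae$. Expanding $zu$ and $uz$, every cross term of the shape $(1-e)(e\cdots)$ or $(e\cdots)(1-e)$ vanishes because $e$ is idempotent, $(1-e)(1-e)=1-e$, and the surviving products are $(ewe)(eae)=wae=e$ and $(eae)(ewe)=awe=e$ (using $ew=we$ and $eae=ae=ea$ to collapse the strings). Thus $zu=uz=1$, so $u=1+ae-e$ is a unit and the preceding Corollary yields $e\leq_{\cH}a$.

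There is no genuine obstacle here beyond the bookkeeping; the one point worth isolating is that $e$ commutes with $w=(a+1-e)\io$, which is what makes the Peirce-type computation collapse. A more structural phrasing of the same idea: $u=a+1-e$ commutes with the idempotent $e$, so it is ``block diagonal'' with respect to $e$ with $e$-corner $eue=eae$; a unit commuting with $e$ is invertible in $R$ iff both corners $eue\in eRe$ and $(1-e)u(1-e)\in (1-e)R(1-e)$ are invertible, so $eae$ is invertible in $eRe$, and then the two earlier Lemmas — invertibility of $a$ along $e$ is equivalent to $eae$ being invertible in $eSe$, and for $e$ commuting with $a$ one has $e\in\Sigma_0(a)\iff e\leq_{\cH}a$ — complete the argument.
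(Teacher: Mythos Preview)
Your proof is correct, and its essential content coincides with the paper's. The paper sets $u=a+1-e$, observes $ue=ae=ea$, and concludes $e=u^{-1}ea=au^{-1}e$; these are exactly your identities $wae=e$ and $awe=e$ (with $w=u^{-1}$, using $ae=ea$). At that point you are already done: $e=a(we)$ gives $e\leq_{\cR}a$, and $e=(we)a$ (from $wae=wea=e$) gives $e\leq_{\cL}a$, so $e\leq_{\cH}a$ follows directly from the definition of the Green preorder. Your subsequent construction of $z=1-e+ewe$ as a two-sided inverse of $1-e+eae$, in order to invoke the preceding Corollary, is therefore an unnecessary (though correct) detour; the paper simply reads off $e\leq_{\cH}a$ from the two identities. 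The Peirce-decomposition viewpoint you sketch at the end is a pleasant repackaging of the same computation.
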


\proof
let $u=a+1-e$. Then $ue=ae=ea$ hence $e= u\io e a=a u\io e$.
\endproof

\subsection{Natural inverse in a ring}

Let $R$ be a ring, and let $a\in R$. Then the semillatice $\Sigma_2(a)$ is actually a distributive lattice (with $e\wedge f=e+f-ef$) hence $a$ is $2-$maximally invertible if and only if it is naturally invertible.

We derive new criterion for the natural inverse to exists.
\begin{theorem}
Let $a\in R$. Then the following are equivalent:
\ben
\item $a$ is naturally invertible with inverse $a^{\parallel M}$;
\item There exists $b\in \{a\}''$, $bab=b$ and $\Sigma_2(a-aba)=\{0\}$;
\item $a=x+y$ with $x\in \{a\}''$, $x\grp$ exists, $xy=0$ and $\Sigma_2(y)=\{0\}$.
\een
In this case, $a^{\parallel M}=b=x\grp$. 
\end{theorem}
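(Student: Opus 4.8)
The plan is to prove the three equivalences in a cycle $(1)\Rightarrow(2)\Rightarrow(3)\Rightarrow(1)$, using the correspondence between bicommuting outer inverses and idempotents in $\Sigma_2(a)$ established earlier, together with the characterization of group invertibility via Green's relation $\cH$ (Lemma~\ref{lemma_Clifford}) and the commutativity Theorem~\ref{commut}. The key structural fact to keep in mind is that $b = a^{\parallel M}$ satisfies $bab = b$, $b \in \{a\}''$, and $ab = ba = M$, so that $x := aba = aM \in \{a\}''$ and $y := a - aba = a(1-M)$. One should note at the outset that $\Sigma_2(a - aba)$ is computed relative to the same ambient ring $R$, and that $M$ acts as an identity on everything in sight via the corner $MRM$.

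For $(1)\Rightarrow(2)$: take $b = a^{\parallel M}$ with $M$ the greatest element of $\Sigma_2(a)$. Then $bab = b$ and $b \in \{a\}''$ follow directly from the definition of the inverse along $M$ and Theorem~\ref{commut}. It remains to show $\Sigma_2(a - aba) = \{0\}$. Write $y = a - aM = a(1-M)$, which commutes with $a$ and with $M$. Suppose $e \in \Sigma_2(y)$; I would argue that $e$ must commute with $a$ (since $e$ bicommutes with $y$ and $a$ is expressible through $y$ and $M$... actually more carefully, one shows $e \in \{a\}''$ by checking $e$ commutes with everything commuting with $a$, using that $y$ and $M$ generate enough). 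The crucial computation is that $e \leq_\cH y$ forces $eM = 0$: indeed $e = ey' $ for some $y'$, and $yM = a(1-M)M = 0$, so $e$ is "supported off $M$". Then $M \vee e = M + e - Me = M + e$ (if $Me = eM = 0$) is an idempotent in $\Sigma_2(a)$ strictly above $M$ unless $e = 0$; since $M$ is greatest, $e = 0$. The main obstacle here is verifying cleanly that $e \in \{a\}''$ and that $e \in \Sigma_2(a-aba)$ genuinely produces an idempotent of $\Sigma_2(a)$ dominating $M$ — this requires the careful bookkeeping of which elements commute with which, and is where I expect to spend the most effort.

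For $(2)\Rightarrow(3)$: given such $b$, set $x = aba$ and $y = a - aba$. Then $a = x + y$. From $b \in \{a\}''$ and $bab = b$ one gets (as in the proof that Drazin inverses are natural inverses, and via Theorem~\ref{commut}) that $e = ab = ba \in \Sigma_2(a)$, whence $x = ae = ea = ax = xa$ lies in $\{a\}''$ and $x \cH x^2$ (because $x = ae$, $bxb = b$ restricted to the corner $eRe$, so $x$ is group invertible in $eRe$ hence in $R$), giving $x\grp$ exists; moreover $xy = aba(a - aba) = aba^2 - (aba)^2$; using $e = ab = ba$ we compute $aba \cdot a \cdot (1-e) = 0$ since $a(1-e) = y$ and $ba y = ba \cdot a(1-ab) $... here one checks $xy = x \cdot a(1-M) = xa(1-M)$ and $x(1-M) = aM(1-M) = 0$, so $xy = 0$. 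Finally $\Sigma_2(y) = \Sigma_2(a - aba) = \{0\}$ is hypothesis (2). So $(3)$ holds with $a^{\parallel M} = b = x\grp$, the last equality because $b$ acts as the group inverse of $x = aba$ inside the corner.

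For $(3)\Rightarrow(1)$: given $a = x + y$ with $x \in \{a\}''$, $x\grp$ exists, $xy = 0$, $\Sigma_2(y) = \{0\}$, set $M = xx\grp = x\grp x \in E(R)$. Since $x \in \{a\}''$ and $x\grp \in \{x\}'' \subseteq \{a\}''$ (the group inverse bicommutes with $x$, and anything commuting with $a$ commutes with $x$ hence with $x\grp$), we get $M \in \{a\}''$. One checks $M \in \Sigma_2(a)$: $M \leq_\cH x \leq_\cH a$ using $x = aM$ (from $xy = 0$: $xy = x(a-x) = xa - x^2$, and... actually derive $xM = x$, $xa = x^2 + xy = x^2$, so $x = aM$ after noting $M$ absorbs correctly). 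Then $a^{\parallel M}$ exists and equals $x\grp$. The remaining point is that $M$ is the \emph{greatest} element of $\Sigma_2(a)$: given any $f \in \Sigma_2(a)$, one shows $f(1-M) \in \Sigma_2(a(1-M)) \subseteq \Sigma_2(y)$ — using $a(1-M) = y$ which should follow from $yM = 0$ and $xy=0$ — so $f(1-M) = 0$ by hypothesis, i.e. $f = fM = Mf \leq M$. Establishing the identity $a(1-M) = y$ and the containment of idempotent sets under the "corner" map is the technical heart of this direction, closely parallel to the obstacle flagged in $(1)\Rightarrow(2)$.
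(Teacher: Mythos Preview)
Your proposal is correct and follows essentially the same cyclic strategy $(1)\Rightarrow(2)\Rightarrow(3)\Rightarrow(1)$ as the paper, with the same choices $x=aba$, $y=a-aba$ in $(2)\Rightarrow(3)$ and $M=xx\grp$ in $(3)\Rightarrow(1)$; the commutant bookkeeping you flag as the ``technical heart'' is exactly what the paper relies on as well.

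The one place where you diverge is $(1)\Rightarrow(2)$. After establishing $e\in\{a\}''$ (via $\{a\}'\subset\{a-aba\}'$, hence $\{a-aba\}''\subset\{a\}''$), the paper simply observes that $e\leq_\cH a-aba=(1-ab)a=a(1-ba)$ forces $e\leq_\cH a$, so $e\in\Sigma_2(a)$ and therefore $e\leq M$, i.e.\ $e=eM$; then the single computation $eM=s(1-ab)\,aba=s\cdot 0=0$ finishes. Your route---first proving $eM=0$ from $yM=0$, then building the join $M+e$, checking it lies in $\Sigma_2(a)$, and invoking maximality---reaches the same conclusion but is a detour: the ingredients you already have (namely $e\in\{a\}''$ and $e\leq_\cH y\leq_\cH a$) give $e\in\Sigma_2(a)$ directly, so $e\leq M$ is immediate and there is no need to construct and verify $M+e\in\Sigma_2(a)$. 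The paper's argument is the streamlined version of yours.
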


\proof
$\,$\\
\vspace{-.5cm}
\ben
\item[$1)\Rightarrow 2)$] Assume $a$ is naturally invertible with inverse $b=a^{\parallel M}$. Then $M=ab=ba$. Let $e\in \Sigma_2(a-aba)$. The $ca=ac\Rightarrow cb=bc\Rightarrow c(a-aba)=(a-aba)c\Rightarrow ec=ce$. Hence $e\in \{a\}''$. But also $\exists t,s\in R,\; e=a(1-ba)t=s(1-ab)a$ and $e\leq_{\cH} a$. Finally, $e\in \Sigma_2(a)$ hence $e\leq M,\; eM=Me=e$.
Computation give $e=eM=s(1-ab)aba=0$. 
\item[$2)\Rightarrow 3)$] Let $b\in \{a\}''$, $bab=b$ and $\Sigma(a-aba)=\{0\}$. Then $x=aba$ and $y=a-aba$ satisfy the required relations ($x\grp =b$).
\item[$3)\Rightarrow 1)$] Finally, let $a=x+y$ with $x\in \{a\}''$, $x\grp$ exists, $xy=0$ and $\Sigma(y)=\{0\}$. By properties of the group inverse, $x\grp\in \{x\}''\Rightarrow xx\grp\in \{a\}''$. Pose $M=xx\grp$. Since $x=xx\grp x=xx\grp a=axx\grp$, $M\leq_{\cH} a$ and $M\in \Sigma_2(a)$. Let $e\in \Sigma_2(a)$. Then $e=a\de a e$, and $e$ is in the bicommutant of $y=a-x$. Then $$e-eM=e(1-xx\grp)=e a\de a(1-xx\grp)=e a\de (x+y)(1-xx\grp)=e a\de y$$ and $e-eM\in \Sigma_2(y)$. By hypothesis, $e-eM=0$ and $e\leq M$, $M$ is the greatest element of $\Sigma_2(a)$ and $a$ is naturally invertible with inverse $a^{\parallel xx\grp}$. 
\een
\endproof

The unique decomposition $a=x+y=aM+(a-aM)=aba+(a-aba)$ as in the previous theorem will be called the natural core decomposition of $a$. 

\subsection{Link with the Koliha-Drazin inverse}

We recall the following definitions of quasinilpotency and quasipolarity in rings due to R. Harte \cite{Harte91}. 

\begin{definition}
An element $q$ of a ring $R$ is quasinilpotent if $\forall x\in \{q\}', 1+xq\in R^{-1}$, and quasi-quasinilpotent if $\forall x\in \{q\}'', 1+xq\in R^{-1}$
\end{definition}

Note that quasi-quasinilpotent elements need not be quasinilpotent in general. The two notions however coincide for Banach algebras.

\begin{definition}
An element $a$ of a ring $R$ is quasipolar (resp. quasi-quasipolar) if there exists a idempotent (called spectral idempotent) $p$ in $\{a\}''$ such that $ap$ is quasinilpotent (resp. quasi-quasinilpotent) and $a+p\in R^{-1}$.
\end{definition}

It was remarked in \cite{Koliha02} that the last condition can be replaced by the following one $1-p\leq_{\cH} a$. This is the content of lemma \ref{lemmaK}.

It was proved by J. Koliha and P. Patricio (theorem 4.2 in \cite{Koliha02}) that quasipolar elements are exactly the generalized Drazin invertible elements (also called Koliha-Drazin invertible elements):
\begin{definition}
An element $a$ of a ring $R$ is generalized Drazin invertible if there exists $b$ in $\{a\}''$ such that $bab=b$  and $a^2b-a$ is quasinilpotent.
\end{definition}

\begin{theorem}
An element $a$ of a ring $R$ is generalized Drazin invertible if and only if it is quasipolar. In this case $b=(a+p)^{-1}(1-p)$.
\end{theorem}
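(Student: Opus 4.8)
I would prove the two implications directly by writing down explicit formulas and checking a short list of identities; the inverse‑along‑an‑element machinery enters only marginally. The two facts I will lean on repeatedly are: (i) if $q$ is quasinilpotent then $1+q\in R^{-1}$ (take $x=1$ in the definition of quasinilpotent), and (ii) $\{a\}''$ is a subring of $R$ containing $a$ and $1$ and closed under inversion of its units (for $x\in\{a\}'$ and a unit $u\in\{a\}''$ one has $xu=ux$, hence $xu^{-1}=u^{-1}x$). Throughout, whenever $b\in\{a\}''$ I use $ab=ba$, since $a\in\{a\}'$.

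\textbf{Quasipolar $\Rightarrow$ generalized Drazin invertible.} Suppose $p\in\{a\}''$ is an idempotent with $ap$ quasinilpotent and $a+p\in R^{-1}$, and set $b=(a+p)^{-1}(1-p)$. Since $p$ commutes with $a$ it commutes with $a+p$, hence with $(a+p)^{-1}$, so by (ii) we get $b\in\{a\}''$. From $(a+p)(1-p)=a(1-p)$ I obtain $ab=(a+p)^{-1}a(1-p)=(a+p)^{-1}(a+p)(1-p)=1-p=ba$. Then $b(1-p)=(a+p)^{-1}(1-p)^{2}=b$, so $bab=b(ab)=b(1-p)=b$, and $a^{2}b-a=a(ab)-a=a(1-p)-a=-ap$ is quasinilpotent. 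Hence $b\in\{a\}''$, $bab=b$, $a^{2}b-a$ quasinilpotent, i.e.\ $a$ is generalized Drazin invertible, and the claimed formula $b=(a+p)^{-1}(1-p)$ holds by construction.

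\textbf{Generalized Drazin invertible $\Rightarrow$ quasipolar.} Suppose $b\in\{a\}''$ with $bab=b$ and $a^{2}b-a$ quasinilpotent. Put $e=ab=ba$; then $e^{2}=a(bab)=ab=e$, so $e$ is an idempotent in the subring $\{a\}''$, and $p=1-e\in\{a\}''$ satisfies $ap=a-a^{2}b=-(a^{2}b-a)$, which is quasinilpotent. The one nontrivial point is $a+p\in R^{-1}$. Using $bab=b$ one checks $eb=(ba)b=bab=b$ and $be=b(ab)=bab=b$, hence $pb=bp=0$; a direct multiplication then gives
\[
(a+p)(b+p)=ab+ap+pb+p^{2}=(e+p)+ap=1+ap=(b+p)(a+p),
\]
and $1+ap\in R^{-1}$ by fact (i). Therefore $a+p$ is a unit with $(a+p)^{-1}=(1+ap)^{-1}(b+p)=(b+p)(1+ap)^{-1}$, so $a$ is quasipolar with spectral idempotent $p$. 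Finally, using $pe=0$, $be=b$ and $apb=a(pb)=0$ one computes $(a+p)^{-1}(1-p)=(1+ap)^{-1}(b+p)e=(1+ap)^{-1}b=b$, which recovers the formula.

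\textbf{Expected obstacle and an alternative.} The only place that is not pure bookkeeping is showing $a+p\in R^{-1}$ in the forward direction; the idea there is to \emph{guess} that, up to the unit $1+ap$ forced by quasinilpotency, the inverse of $a+p$ is $b+p$. Everything else reduces to the identities $ab=ba$, $eb=be=b$, $pb=bp=0$, and closure of $\{a\}''$ under inversion. As an aside, one may instead invoke the remark preceding the statement and Lemma~\ref{lemmaK}, replacing $a+p\in R^{-1}$ by $1-p\leq_{\cH}a$: in the forward direction this is immediate from $1-p=ba\leq_{\cL}a$ and $1-p=ab\leq_{\cR}a$, and in the backward direction Lemma~\ref{lemmaK} applied to the idempotent $1-p$ (which commutes with $a$) yields $a+p\in R^{-1}$, after which the formula $b=(a+p)^{-1}(1-p)$ follows as above.
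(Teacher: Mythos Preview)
Your main proof is correct and complete. Note, however, that the paper does not actually prove this theorem: it is quoted as Theorem~4.2 of Koliha--Patr\'icio \cite{Koliha02} and stated without proof. So there is no ``paper's own proof'' to compare against; you have supplied a clean direct argument where the paper simply cites the literature. The key computations---$ab=ba=1-p$, $bab=b$, $(a+p)(b+p)=1+ap$, and the recovery of $b$ from $(a+p)^{-1}(1-p)$---are all verified correctly, and your use of closure of $\{a\}''$ under inversion is sound.

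One small correction to your final ``alternative'' paragraph: Lemma~\ref{lemmaK} goes in the direction $a+p\in R^{-1}\Rightarrow 1-p\leq_{\cH}a$, not the reverse. So you cannot use it to deduce $a+p\in R^{-1}$ from $1-p\leq_{\cH}a$ as you suggest. If you want that converse you would need the corollary preceding Lemma~\ref{lemmaK}, which gives $1-p\leq_{\cH}a\Leftrightarrow 1+a(1-p)-(1-p)=a(1-p)+p\in R^{-1}$; this is $a-ap+p$, not $a+p$, so an extra step (e.g.\ multiplying by the unit $1+ap$) would still be needed. This does not affect your main argument, which stands on its own.
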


Next theorem proves that the natural inverse generalizes not only the Drazin inverse, but also the Koliha-Drazin inverse in a ring:

\begin{theorem}\label{thring}
Let $R$ be a ring, and $a\in R$ be quasi-quasipolar with spectral idempotent $p$ and Koliha-Drazin inverse $b$. Then $a$ is naturally invertible, $M=1-p$ is the greatest element of $\Sigma_2(a)$ and the generalized Drazin inverse $b$ is equal to $a^{\parallel M}$, the natural generalized inverse of $a$.
\end{theorem}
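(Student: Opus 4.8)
The plan is to deduce the theorem from the characterization theorem proved just above (the one with the three equivalent conditions for natural invertibility in a ring), whose one missing ingredient here is the following lemma, which I would state and prove first: \emph{if $q\in R$ is quasi-quasinilpotent, then $\Sigma_2(q)=\{0\}$}. This is where the real work lies; once it is available, the rest is assembly.

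For the lemma, let $e\in\Sigma_2(q)$, so $e=e^2$, $e\in\{q\}''$ and $e\in\Sigma_0(q)$; in particular $q^{\parallel e}$ exists. Since $\{q\}''\subseteq\{q\}'$ we have $eq=qe$, hence $eqe=eq=qe$. Put $c=q^{\parallel e}$. Because $e\in\{q\}''$, every element commuting with $q$ commutes with $e$, so the pair $A=(q,e)$ has $A'=\{q\}'$ and therefore $A''=\{q\}''$; by Theorem \ref{commut}, $c\in A''=\{q\}''$. As $c\cH e$ we have $c\in eRe$, and the defining relation $c(eqe)=e$ gives $cq=c(eq)=c(eqe)=e$. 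Since $-c\in\{q\}''$, quasi-quasinilpotency of $q$ makes $1-cq=1-e$ invertible; as $e(1-e)=0$ this forces $e=0$. Hence $\Sigma_2(q)=\{0\}$.

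Now I assemble. From $b\in\{a\}''$ and $bab=b$ we get $ab=ba$ and $aba=a^2b$, so $a-aba=-(a^2b-a)$ is quasi-quasinilpotent, hence $\Sigma_2(a-aba)=\{0\}$ by the lemma; the characterization theorem (existence of $b\in\{a\}''$ with $bab=b$ and $\Sigma_2(a-aba)=\{0\}$) then gives that $a$ is naturally invertible with $a^{\parallel M}=b$. It remains to identify $M$. The hypothesis $a+p\in R^{-1}$ gives $1-p\leq_{\cH}a$ (Lemma \ref{lemmaK}), and since $1-p$ commutes with $a$ we get $1-p\in\Sigma_0(a)$, so $a^{\parallel 1-p}$ exists; by the corner lemma it is the inverse of $(1-p)a(1-p)=a(1-p)$ in $(1-p)R(1-p)$, hence the group inverse of $x:=a(1-p)$ (with $xx\grp=1-p$). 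Setting $y:=ap$, one checks $x\in\{a\}''$, $xy=a^2(1-p)p=0$, and $\Sigma_2(y)=\Sigma_2(ap)=\{0\}$ by the lemma; so the decomposition $a=x+y$ satisfies the third condition of the characterization theorem, and following the proof of that implication the greatest element of $\Sigma_2(a)$ is $M=xx\grp=1-p$. Thus $M=1-p$ and $a^{\parallel M}=b$, which is the assertion. (Alternatively, once $a^{\parallel M}=b$ is known, $M=ab=1-p$ drops out of $b=(a+p)^{-1}(1-p)$ because $p$ commutes with $(a+p)^{-1}$.)

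I expect the main obstacle to be the lemma $\Sigma_2(q)=\{0\}$ for quasi-quasinilpotent $q$. The substantive points are that quasi-quasinilpotency must be tested against the element $x=-q^{\parallel e}$ rather than something ad hoc, and that one needs $q^{\parallel e}\in\{q\}''$ — which holds precisely because $e\in\{q\}''$ collapses the bicommutant $(q,e)''$ onto $\{q\}''$; after that, $1-e\in R^{-1}$ immediately yields $e=0$.
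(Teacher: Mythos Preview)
Your argument is correct. It differs from the paper's in organization: the paper argues directly that $M=1-p$ dominates every $f\in\Sigma_2(a)$ by writing $f=xa$ and invoking quasi-quasinilpotency of $ap$ to make $1-fp=1-x(ap)$ a unit (whence $fp=0$ and $f\le 1-p$), and then identifies $b=(a+p)^{-1}(1-p)$ with $a^{\parallel M}$ by unicity. You instead isolate the clean lemma ``$q$ quasi-quasinilpotent $\Rightarrow\Sigma_2(q)=\{0\}$'' and feed $q=a-aba$ into the ring characterization theorem. The core mechanism is the same --- the inverse along an idempotent in $\Sigma_2$ lands in the relevant bicommutant, and quasi-quasinilpotency then forces the idempotent to vanish --- but your packaging has a genuine technical advantage: in your lemma the multiplier $-c=-q^{\parallel e}$ sits in $\{q\}''$ on the nose, whereas the paper's direct route requires the multiplier to lie in $\{ap\}''$, and membership in $\{a\}''$ does not automatically yield this even when $p\in\{a\}''$. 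Factoring through the characterization theorem neatly sidesteps that bicommutant mismatch.

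One small point to tighten: the line ``$a-aba=-(a^2b-a)$ is quasi-quasinilpotent'' needs its justification made explicit. The generalized-Drazin definition in the paper only asserts that $a^2b-a$ is \emph{quasinilpotent}, which in a general ring need not imply quasi-quasinilpotent. What actually works (and what you evidently know, given your closing parenthetical) is the computation $ab=1-p$ from $b=(a+p)^{-1}(1-p)$, so that $a-aba=ap$, which is quasi-quasinilpotent by the quasi-quasipolar hypothesis itself. State that identity up front and the assembly is airtight; your longer detour to $M=1-p$ via condition~(3) then becomes unnecessary, since $M=ab=1-p$ is immediate.
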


\proof
%Assume $a$ is quasipolar in the semigroup sense and let $b=a\de$. We have to prove that $q=a^2b-a$ is quasinilpotent. Let $x\in \{q\}'$.
Assume $a$ is quasi-quasipolar in the ring sense. Then exists $p$ spectral idempotent, $p$ in $\{a\}''$ such that $ap$ is quasi-quasinilpotent and $a+p\in R^{-1}$. By lemma \ref{lemmaK}, $M=1-p\leq_{\cH} a$, hence it is in $\Sigma_2(a)$. Let $f\in \Sigma_2(a)$. Then exists $x\in S, f=xa$. By quasi-quasinilpotency, $(1-fp)=(1-xap)\in R^{-1}$. But by commutativity of $\{a\}''$ and the fact that $f,p\in E(S)$, we have $(1-fp)(1+fp)=1-fp$. By invertibility, $1+fp=1$ hence $fp=0$. It follows that $fM=f(1-p)=f-fp=f$ and $f\leq M$ for the natural partial order. $M$ is the greatest element of $\Sigma_2(a)$. 
Now the generalized Drazin inverse of $a$ $b=(a+p)^{-1} (1-p)$ is obviously in $\cH_{(1-p)}$ and is an outer inverse of $a$ by definition. By unicity, it is $a^{\parallel M}$.
\endproof

If we require the element $a$ to be quasipolar instead of quasi-quasipolar with Koliha-Drazin inverse $b$, then the idempotent $M=1-p$ is actually the greatest element $\Sigma_1(a)$ and $b=a^{\parallel M}$ is the $1-$natural generalized inverse of $a$. It is not in $\{a\}''$ in general. 

\section{The Banach Algebra case}

%\subsection{Banach Algebra}
If $a\in A$ with $A$ a Banach algebra, we denote the spectrum of $a$ by $\sigma(a)$ and the spectral radius by $r(a)$.\\

Recall that in a Banach algebra, an element is quasinilpotent if its spectrum reduces to $0$, or equivalently if its spectral radius is $0$, and quasipolar if $0$ an isolated point of the spectrum. It is known \cite{Harte91} that these notions coincide with their ring counterpart, and also with the quasi-quasi notion (for instance, $\sigma(a)=\{0_{\CC}\}$ if and only if  $a$ is quasinilpotent in the ring sense if and only if $a$ is quasi-quasinilpotent in the ring sense).

\begin{corollary}
Let $a\in A$. If $0$ is an isolated point of the spectrum of $a$, then $a$ is naturally invertible.
\end{corollary}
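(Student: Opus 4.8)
The plan is to reduce the statement to Theorem~\ref{thring} via the dictionary, recalled in the paragraph immediately preceding the corollary, between the Banach-algebra and ring notions of quasipolarity. First I would produce the spectral idempotent: since $0$ is an isolated point of $\sigma(a)$, the Riesz projection $p=\frac{1}{2\pi i}\int_{\Gamma}(\lambda-a)^{-1}\,d\lambda$ associated with the spectral set $\{0\}$ (with $\Gamma$ a small circle around $0$ separating it from the rest of $\sigma(a)$) is well defined by the holomorphic functional calculus. It is an idempotent, and being a norm limit of polynomials in the resolvents $(\lambda-a)^{-1}$ — each of which commutes with everything that commutes with $a$ — it lies in $\{a\}''$. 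The functional calculus also gives $\sigma(a+p)=\big(\sigma(a)\setminus\{0\}\big)\cup\{1\}\not\ni 0$, so $a+p$ is invertible, and $\sigma(ap)=\{0\}$, so $ap$ is quasinilpotent in the Banach sense.

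Next I would invoke the equivalences recalled above (due to Harte~\cite{Harte91}): an element whose spectrum reduces to $\{0\}$ is quasinilpotent in the Banach sense, which coincides with being quasinilpotent in the ring sense and with being quasi-quasinilpotent in the ring sense. Hence the same idempotent $p$ witnesses that $a$ is quasi-quasipolar in the ring sense, with associated Koliha--Drazin inverse $b=(a+p)^{-1}(1-p)$ (the generalized Drazin inverse, as in the Koliha--Patricio result quoted above, \cite{Koliha02}).

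Finally I would apply Theorem~\ref{thring} directly: since $a$ is quasi-quasipolar in the ring sense with spectral idempotent $p$, the element $a$ is naturally invertible, $M=1-p$ is the greatest element of $\Sigma_2(a)$, and $a^{\parallel M}=b$. This yields the assertion of the corollary, with the extra information on $M$ and on $a^{\parallel M}$ coming for free.

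I do not expect a real obstacle: the only non-formal ingredients are that the Riesz idempotent belongs to the double commutant $\{a\}''$ and that "spectrum $\{0\}$" upgrades to quasi-quasinilpotency in the ring sense, and both are standard facts already cited in the text. Everything else is a verbatim appeal to Theorem~\ref{thring}, so the corollary is essentially immediate once the translation to the ring setting is in place.
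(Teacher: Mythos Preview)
Your proposal is correct and follows essentially the same route as the paper: show that an isolated spectral point at $0$ makes $a$ quasi-quasipolar in the Banach sense, pass to the ring notion via Harte's equivalences, and then invoke Theorem~\ref{thring}. The paper's proof is terser (it does not spell out the Riesz projection or the spectral computations), but the logical skeleton is identical.
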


\proof
If $0$ is an isolated point of the spectrum of $a$, then $a$ is quasi-quasipolar in the Banach sense, hence it is quasi-quasipolar ring sense. We then apply theorem \ref{thring}.
\endproof

We now investigate the link between $\Sigma_i(a), i=1,2$ and $\sigma(a)$.

\begin{theorem}
Let $A$ be a unital Banach algebra, $a\in A$. Then 
\begin{enumerate}
\item $\sigma(a)=\{0_{\CC}\}\Rightarrow \Sigma_1(a)=\{0\}$.
\item $\Sigma_2(a)=\{0\}\Rightarrow \sigma(a)$ is connected and contains $0_{\CC}$.
\end{enumerate}
\end{theorem}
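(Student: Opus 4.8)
For part (1), the plan is to argue contrapositively via the relation between $\Sigma_1(a)$ and units established in the ring subsection. Suppose $e\in\Sigma_1(a)$ with $e\neq 0$; then $ae=ea$ and $e\leq_\cH a$, so by the corollary following Theorem~\ref{thunit} the element $u=1+ae-e$ is a unit. Now I would compute $\sigma(u)$ in terms of $\sigma(a)$ using the fact that $e$ is an idempotent commuting with $a$: decomposing $A$ (or just the closed subalgebra generated by $a$ and $e$, which is commutative) along $e$ and $1-e$, one sees $u$ acts as $ae$ on the ``$e$-corner'' and as $1$ on the ``$(1-e)$-corner'', so $0\notin\sigma(ae|_{eAe})$. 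If $\sigma(a)=\{0_\CC\}$ then $a$ is quasinilpotent, hence so is $ae$ (it lies in the commutant of $a$, or simply $r(ae)\le r(a)=0$), which forces $ae|_{eAe}=0$ on the corner, i.e. $ae=aee=0$; but then $e=eae\cdot(\text{nothing})$—more directly, $e\leq_\cH a$ gives $e\cR ae$ by the earlier lemma, and $ae=0$ forces $e=0$, a contradiction. So $\Sigma_1(a)=\{0\}$.

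For part (2), again I would argue contrapositively: assume $\sigma(a)$ is either disconnected or does not contain $0_\CC$, and produce a nonzero idempotent in $\Sigma_2(a)$. If $0_\CC\notin\sigma(a)$ then $a$ is invertible, $M=1$ works ($a^{\parallel 1}=a^{-1}\in\{a\}''$), and $\Sigma_2(a)\ni 1\neq 0$. If $\sigma(a)$ is disconnected, write $\sigma(a)=\sigma_1\sqcup\sigma_2$ as a partition into two nonempty relatively clopen sets; choose the labelling so that $0_\CC\in\sigma_2$ if $0_\CC\in\sigma(a)$ (and arbitrarily otherwise). Let $p$ be the spectral idempotent (Riesz projection) associated to $\sigma_1$ via holomorphic functional calculus: $p\in\{a\}''$, $p\neq 0$, $p\neq 1$, and $\sigma(ap|_{pAp})=\sigma_1$, which does not contain $0_\CC$, so $ap$ is invertible in the corner $pAp$. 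By the corner-invertibility lemma, $p\in\Sigma_0(a)$, and since $p\in\{a\}''$ in fact $p\in\Sigma_2(a)$; as $p\neq 0$ this shows $\Sigma_2(a)\neq\{0\}$. Contrapositive complete.

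The main obstacle I anticipate is the bookkeeping in part (1): one must be careful that $e\leq_\cH a$ with $ae=ea$ really does let us read off $\sigma(ae)$ as $(\sigma(a)\cap$ something$)\cup\{0\}$ or similar, and in a noncommutative algebra ``the spectrum of $ae$ in the corner $eAe$'' versus ``in $A$'' differ only by $\{0\}$, which is exactly the point one needs. The cleanest route is probably to pass to the commutative Banach subalgebra $B$ generated by $a$, $e$ and $1$ (using Corollary~\ref{corcommut} to get commutativity of all relevant elements), apply the Gelfand transform there, and note that a nonzero idempotent $e$ in $B$ has a character $\chi$ with $\chi(e)=1$; then $\chi(u)=\chi(a)\neq 0$, and if $\sigma(a)=\{0\}$ we get $\chi(a)=0$ in $B$ (spectral radius does not grow in subalgebras for the quasinilpotent case, or more carefully $\sigma_B(a)\supseteq\sigma_A(a)$ always and the reverse inclusion is not needed here—we only need $\chi(a)\in\sigma_B(a)$ and $0\in$ every point of a one-point spectrum, so $\chi(a)=0$), contradicting $\chi(u)=\chi(1+ae-e)=1+\chi(a)-1=\chi(a)=0\neq\chi(u)=1$. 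Part (2) is essentially just the Riesz functional calculus and should go through without difficulty.
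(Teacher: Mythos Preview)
Your argument for part (2) is essentially the paper's: contrapositive, split into the invertible case ($1\in\Sigma_2(a)$) and the disconnected case (Riesz idempotent from holomorphic functional calculus lands in $\{a\}''\cap\Sigma_0(a)$). Nothing to add there.

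For part (1) your route is correct but substantially different from the paper's. The paper gives a one-line norm estimate: from $e\in\Sigma_1(a)$ one has $e=aa^{\parallel e}=a^{\parallel e}a$, hence $e=e^n=a^n(a^{\parallel e})^n$ and
\[
\|e\|^{1/n}\le \|a^n\|^{1/n}\,\|(a^{\parallel e})^n\|^{1/n}\to r(a)\cdot r(a^{\parallel e})=0,
\]
so $e=0$. Your approach instead invokes the ring/corner machinery: $e\in\Sigma_0(a)$ forces $eae=ae$ invertible in the Banach algebra $eAe$, while $r(ae)\le r(a)r(e)=0$; an invertible quasinilpotent cannot exist in a nonzero complex Banach algebra, so $e=0$. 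This is perfectly valid and has the merit of reusing the structural lemmas already proved, but it is longer and your write-up muddles the punchline: the contradiction yields $e=0$ directly (the corner algebra collapses), not ``$ae|_{eAe}=0$ hence $ae=0$'' as you phrase it---that step, while ultimately true, inverts the logical order. Your Gelfand alternative has a genuine small gap: you need $u$ (or rather $u^{-1}$, or better $a^{\parallel e}$) to lie in the commutative subalgebra $B$ in order to conclude $\chi(u)\neq 0$; adjoin $a^{\parallel e}$ to the generators (Corollary~\ref{corcommut} guarantees commutativity) and then $\chi(e)=1$ gives $\chi(a)\chi(a^{\parallel e})=1$, forcing $\chi(a)\neq 0$, which contradicts $r_B(a)=r_A(a)=0$. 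The displayed chain ending ``$\ldots=0\neq\chi(u)=1$'' is garbled and should be dropped.
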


\begin{proof}
\begin{enumerate}
\item If the spectrum of $a$ reduces to $0$, the its spectral radius is equal to $0$. Let $e\in \Sigma_1(a)$. Then $e=aa\de=a\de a$.
We get $$||e||^{\frac{1}{n}}=||e^n||^{\frac{1}{n}}=||a^n (a\de)^n||^{\frac{1}{n}}\leq ||a^n||^{\frac{1}{n}} ||(a\de)||\to 0$$
and $||e||=0$.
\item 
If $a$ is invertible, then $1\in \Sigma_2(a)$. Hence assume $\sigma(a)$ contains $0$ but is not connected. Then $\sigma(a)=C_0\cup C_1$ with $0\in C_0$ and $C_0, C_1$ disjoint and open and closed in $\sigma(a)$. Then the holomorphic calculus for $f(z)=\frac{1}{z}$ on $U$ open set containg $C_1$ and $0$ outside $U$, such that $U$ contains an open neighbourhood of $C_0$, defines an element $x=f(a)$ of $\{a\}''$ such that $ax=xa=e$ is idempotent and non zero, and $\Sigma_2(a)$ does not reduce to $\{0\}$.
\end{enumerate}
\end{proof}

Now, we consider three different (commutative) Banach algebras to show that we cannot do better in the theorem, nor define natural invertibility in terms of the spectrum. 

\begin{itemize}
\item Consider the Banach algebra $C_{0}([0,1])$ of continuous functions on $[0,1]$, and let $a(t)=t$. Then $\sigma(a)=[0,1]$ and $\Sigma(a)=\{0\}$. $a$ is naturally invertible with $b=0$.
\item Consider the Banach algebra $C_{0}([0,1]\cup[2,3])$ of continuous functions on $[0,1]\cup [2,3]$, and let $a(t)=t, 0\leq t\leq 1$ and $a(t)=t-1, 2\leq t\leq 3$. Then $\sigma(a)=[0,2]$ and $\Sigma(a)=\{\mathbf{1}_{[2,3]}\}$. $a$ is naturally invertible with $b(t)=0, 0\leq t\leq 1$ and $b(t)=\frac{1}{t-1}, 2\leq t\leq 3$.
\item Consider now the Banach algebra $L^{\infty}([0,1])$ of essentially bounded measurable functions  on $[0,1]$, and let $a(t)=t$. Then $\sigma(a)=[0,1]$ and $\Sigma(a)=\{\textbf{1}_A,\; \exists 0<c\leq 1, \lambda(A\cap [0,c])=0\}$. This set admits no maximal element, hence $a(t)=t$ is not naturally invertible. 
\end{itemize}

It appears that natural invertibility is strongly linked with the nature of the structure space (or spectrum) of the whole commutative Banach algebra $B=\{a\}''$, independently of the nature of the spectrum of the element $a$. Obviously, if the spectrum of $\{a\}''$ is not connected, then Shilov's idempotent theorem gives the existence of a nontrivial idempotent. This idempotent needs not to be in $\Sigma(a)$.\\

Next theorem uses the generalized spectral theory of Hile and Pfaffenberger (\cite{Hile85}, \cite{Hile87}) and the associated functional calculus to construct elements in $\Sigma_j(a), j=1,2$. The construction is similar to the case of a disconnected spectrum, but instead of using $\sigma(a)$ (that can be connected), we use the generalized spectrum of Hile and Pfaffenberger. If $a,q\in A$, then the spectrum of $a$ relative to $q$, or $q-$spectrum of $a$ $\sigma_q(a)$, is the set of points $z$ such that $a-z.1-\bar{z}q$ is not invertible in $A$.

\begin{theorem}
Let $a,q\in A$, with $\sigma(a)$ connected set that contains $0$. Assume $\sigma(q)\cap \mathbb{T}=\emptyset$, where $\mathbb{T}$ is the unit circle, and $\sigma_q(a)$ is not connected. Then
\ben
\item if $q\in \{a\}'$, $\Sigma_1(a)$ is not empty;
\item if $q\in \{a\}''$, $\Sigma_2(a)$ is not empty.
\een
\end{theorem}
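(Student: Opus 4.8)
The plan is to imitate the construction used in the proof of the previous theorem for the disconnected-spectrum case, but with $\sigma(a)$ replaced by the $q$-spectrum $\sigma_q(a)$ and the holomorphic functional calculus replaced by the Hile--Pfaffenberger generalized functional calculus attached to $q$ \cite{Hile85,Hile87}. First I would note that $0\in\sigma(a)$ forces $a$ to be non-invertible, so the element $a-z\cdot1-\bar z q$ at $z=0$ equals $a$, which is not invertible, whence $0\in\sigma_q(a)$; moreover $\sigma(q)\cap\mathbb{T}=\emptyset$ ensures, as in \cite{Hile85}, that $\sigma_q(a)$ is compact. Being disconnected, it splits as $\sigma_q(a)=K_0\sqcup K_1$ with $K_0,K_1$ nonempty, compact and disjoint, and we may take $0\in K_0$, so $0\notin K_1$. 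Choosing disjoint open neighbourhoods $U_0\supseteq K_0$ and $U_1\supseteq K_1$ with $0\notin U_1$, the function equal to $1$ on $U_1$ and $0$ on $U_0$ is (being locally constant) generalized-analytic near $\sigma_q(a)$, so the Hile--Pfaffenberger calculus assigns to it an idempotent $e\in A$, the spectral idempotent of the clopen piece $K_1$; crucially $e$ lies in the closed subalgebra generated by $1$, $q$ and the $q$-resolvents $(a-z\cdot1-\bar z q)^{-1}$, $z\notin\sigma_q(a)$.

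Since any element commuting with both $a$ and $q$ commutes with each $a-z\cdot1-\bar z q$, hence with each $q$-resolvent, hence with $e$, the commutation part is then routine: in case (1), $q\in\{a\}'$ means $a$ commutes with $q$, so $e\in\{a\}'$; in case (2), $q\in\{a\}''$ means every element of $\{a\}'$ commutes with $q$, hence with $e$, so $e\in\{a\}''$. In either case $e$ commutes with $a$ and with $q$.

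The crux is to show $e\in\Sigma_0(a)$, which by the corner lemma amounts to proving that $eae$ is invertible in $eAe$. As $e$ commutes with both $a$ and $q$, both are block-diagonal for $1=e+(1-e)$, i.e.\ $a=eae+(1-e)a(1-e)$ and likewise for $q$; hence for each $z\in\mathbb{C}$ the element $a-z\cdot1-\bar z q$ is invertible in $A$ if and only if $e(a-z\cdot1-\bar z q)e$ is invertible in $eAe$ and $(1-e)(a-z\cdot1-\bar z q)(1-e)$ is invertible in $(1-e)A(1-e)$. This decomposes the $q$-spectrum as $\sigma_q(a)=\sigma_{eqe}^{eAe}(eae)\cup\sigma_{(1-e)q(1-e)}^{(1-e)A(1-e)}\big((1-e)a(1-e)\big)$, and by the spectral-subspace decomposition inherent in the Hile--Pfaffenberger calculus the first piece equals $K_1$ and the second $K_0$. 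Since $0\notin K_1=\sigma_{eqe}^{eAe}(eae)$, taking $z=0$ shows $eae$ is invertible in $eAe$, so $e\in\Sigma_0(a)$. Also $e\neq0$, for otherwise $eAe=\{0\}$, whose relative $q$-spectrum is empty, contradicting $K_1\neq\emptyset$. Combining $e\neq0$ with $e\in\{a\}'$ resp.\ $e\in\{a\}''$ yields a nonzero element of $\Sigma_1(a)$ resp.\ $\Sigma_2(a)$, establishing (1) resp.\ (2).

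The step I expect to be the real obstacle is the correct use of the Hile--Pfaffenberger machinery: because $z\mapsto a-z\cdot1-\bar z q$ is not holomorphic, one cannot quote the classical Riesz projection directly, and what is genuinely needed is its generalized analogue, namely that a clopen piece $K_1$ of the $q$-spectrum carries an idempotent $e$ with $\sigma_{eqe}^{eAe}(eae)=K_1$. The block-diagonalization and the passage through the corner lemma are, by contrast, elementary. Finally, I note that connectedness of $\sigma(a)$ is not used in this argument; it only records that the disconnected-spectrum construction of the previous theorem does not already apply.
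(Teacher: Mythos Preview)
Your argument is correct and follows the same strategy as the paper: both extract, via the Hile--Pfaffenberger generalized functional calculus, a nontrivial idempotent associated to a clopen piece of $\sigma_q(a)$ not containing $0$, and then check it lies in $\Sigma_0(a)$ with the right commutation. The paper is simply terser: it invokes Theorem~12 and equation~(4.3) of \cite{Hile85} at $z=0$ to read off directly that the resulting idempotent $p$ satisfies $p\leq_{\cR} a$ (the remaining $\leq_{\cL}$ and commutation being immediate), whereas you unfold the same content through the corner lemma by showing $0\notin K_1=\sigma^{eAe}_{eqe}(eae)$. Your honest flag that the spectral-subspace decomposition for the $q$-calculus is the genuine input is exactly right; that is precisely what the cited Hile--Pfaffenberger theorem supplies. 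Your closing remark that connectedness of $\sigma(a)$ is not logically used is also accurate---the hypothesis only serves to place the result outside the reach of the classical Riesz projection.
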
 

\proof
This is a consequence of Theorem 12 in \cite{Hile85}. Indeed, since $a$ is not invertible, $0$ is in the $q$ spectrum of $a$. Since $\sigma_q(a)$ is not connected, we can find closed rectifiable curve $\Gamma$ in the $q$ resolvent such that $0$ is in its exterior and its interior contains elements of $\sigma_q(a)$ (a component of $\sigma_q(a)$ that does not contains $0$).
Choosing $z=0$ in equation $4.3$ gives an idempotent $p\leq_{\cR} a$. The rest follows from commutation properties.
\endproof

%Obviously, such a $q$ need not exist in general. However, Hile and Paffenberger give a example of $a$ and $q$ with connected spectrum such that $\sigma_q(a)$ is not connected.
%We could also use results of M. Berkani \cite{Berkani96}. 
\section{Operators}

\subsection{Local spectral theory}
%\underline{\textsc{Local spectral theory}}
In the operator case, we can improve somehow the results of the previous section. Let $X$ be a Banach space and $T\in \cB(X)$. $T(X)$, or $R(T)$ denotes its range, $N(T)$ its kernel. We use ideas from local spectral theory (\cite{Mbekhta87}, \cite{Mbekhta08}, \cite{Aiena04}, \cite{Harte09local}) and define the following sets:
\begin{definition}
$\,$\\
\vspace{-.5cm}
\begin{itemize}
\item The hyperrange of $T$ is the linear space $T^{\infty}(X)=\bigcap_{n\in \NN} T^n (X)$;
\item The hyperkernel of $T$ is the linear space $N^{\infty}(T)=\bigcup_{n\in \NN} N(T^n)$;
\item The quasinilpotent part (or transfinite kernel) of $T$ is the linear space $H_0(T)=\{x\in X,\; ||T^n x||^{\frac{1}{n}}\to 0\}$;
\item The algebraic core of $T$ $C(T)$ is the largest subspace such that $T(M)=M$;
\item The analytic core (or transfinite range) of $T$ $K(T)$ consists of all vectors $x_0 \in X$ for which there exist a sequence
$x_n \in X$ such that $Tx_n = x_{n-1}$ and exists $c>0, ||x_n||\leq c^n||x_0||$.
\end{itemize}
\end{definition}

The algebraic core can also be defined as follows : $C(T)$ consists of all vectors $x_0 \in X$ for which there exist a sequence
$x_n \in X$ such that $Tx_n = x_{n-1}$. We then have the following inclusions :
$$K(T)\subset C(T) \subset T^{\infty}(X),\quad N^{\infty}(T)\subset H_0(T).$$

In \cite{Harte09local}, it is proved that for a bounded operator $T$, the analytic core corresponds to the holomorphic range $\{\lim_{z\to 0} (T-zI)f(z),\; f\in Holo(0,X)\}$, and that the intersection if the analytic core with $N(T)$ is the holomorphic kernel of $T$ $\{g(0),\; (T-zI)g(z)=0,\; g\in Holo(0,X)\}$.

We have the following relations:
\begin{proposition}\label{propincl}
Let $P\in \Sigma_1(T)$. Then $P(X)\subset K(T)$ and $H_0(T)\subset N(P)$.
\end{proposition}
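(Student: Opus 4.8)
The plan is to recall that $P \in \Sigma_1(T)$ means $P$ is an idempotent in $\mathcal{B}(X)$ that commutes with $T$ and satisfies $P \leq_{\cH} T$; equivalently (by the earlier lemma on idempotents commuting with $a$), $P = TP^{\parallel} = P^{\parallel}T$ where I write $P^{\parallel} = T^{\parallel P}$, and $TP = PT$, $T^{\parallel P}P = P$. The two inclusions are essentially dual, so I would prove $P(X) \subset K(T)$ in detail and then indicate the symmetric argument for $H_0(T) \subset N(P)$.

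For $P(X) \subset K(T)$: let $x_0 \in P(X)$, so $Px_0 = x_0$. I want a sequence $(x_n)$ with $Tx_n = x_{n-1}$ and $\|x_n\| \leq c^n \|x_0\|$. The natural candidate is $x_n = (T^{\parallel P})^n x_0$. Since $T^{\parallel P}$ commutes with $T$ (Corollary~\ref{corcommut}, as $P$ commutes with $T$) and $T T^{\parallel P} = P$ on the relevant elements, I compute $T x_n = T (T^{\parallel P})^n x_0 = (T^{\parallel P})^{n-1} (T T^{\parallel P}) x_0 = (T^{\parallel P})^{n-1} P x_0 = (T^{\parallel P})^{n-1} x_0 = x_{n-1}$, using $P x_0 = x_0$ and that $P$ commutes with $T^{\parallel P}$. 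For the norm bound, since $T^{\parallel P}$ is a fixed bounded operator, $\|x_n\| = \|(T^{\parallel P})^n x_0\| \leq \|T^{\parallel P}\|^n \|x_0\|$, so $c = \max(1, \|T^{\parallel P}\|)$ works. Hence $x_0 \in K(T)$.

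For $H_0(T) \subset N(P)$: let $x \in H_0(T)$, i.e. $\|T^n x\|^{1/n} \to 0$. Since $P$ commutes with $T$ and $P = T^{\parallel P} T$ on $P(X)$, I use $Px = T^{\parallel P} T P x = T^{\parallel P} P T x$, and more generally iterating, $Px = (T^{\parallel P})^n T^n P x = (T^{\parallel P})^n P T^n x$ for every $n$. Then $\|Px\| = \|(T^{\parallel P})^n P T^n x\| \leq \|T^{\parallel P}\|^n \|P\| \|T^n x\|$, so $\|Px\|^{1/n} \leq \|T^{\parallel P}\| \cdot \|P\|^{1/n} \cdot \|T^n x\|^{1/n} \to \|T^{\parallel P}\| \cdot 0 = 0$ (or rather, the right side tends to $0$ while the left side is constant), forcing $\|Px\| = 0$, i.e. $x \in N(P)$. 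I would double-check the identity $Px = (T^{\parallel P})^n P T^n x$ by induction, using at each step that $P$ is idempotent, commutes with both $T$ and $T^{\parallel P}$, and $TT^{\parallel P}P = P$.

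The main obstacle is purely bookkeeping: getting the precise algebraic identities $TT^{\parallel P} = P = T^{\parallel P}T$ \emph{restricted to the right invariant subspace} and the commutations $PT = TP$, $PT^{\parallel P} = T^{\parallel P}P$ all lined up, since $T^{\parallel P}$ is only an outer inverse and these equalities need not hold globally on $X$ — they hold after multiplying by $P$, which is exactly what is available since $x_0 \in P(X)$ in the first part and we apply everything to $Px$ in the second. No genuine analytic difficulty arises; the spectral radius-type estimates are elementary once the algebra is correct.
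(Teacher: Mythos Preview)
Your proof is correct and follows essentially the same approach as the paper: define $x_n=(T^{\parallel P})^n x_0$ for the first inclusion and use $P=(T^{\parallel P})^n T^n$ for the second, together with the obvious norm bounds. One small simplification: your worry that $TT^{\parallel P}=P=T^{\parallel P}T$ might hold only after multiplying by $P$ is unfounded --- since $T^{\parallel P}\cH P$ gives $T^{\parallel P}=PT^{\parallel P}=T^{\parallel P}P$, and $TP=PT$, one gets $TT^{\parallel P}=TPT^{\parallel P}=PTT^{\parallel P}=P$ globally (and dually $T^{\parallel P}T=P$); this is exactly what the paper uses, so the extra $P$ in your identity $Px=(T^{\parallel P})^n P\,T^n x$ is harmless but unnecessary.
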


\proof
Let $P\in \Sigma_1(T)$. Then $P=TT^{\parallel P}P= TT^{\parallel P}=T^{\parallel P}T$.\\
Let $x_0\in P(X)$, and for all $n>0$, pose $x_n=(T^{\parallel P})^n x_0$. Then $Tx_{n}=T(T^{\parallel P})^n x_0=P(T^{\parallel P})^{n-1} x_0=(T^{\parallel P})^{n-1} Px_0=(T^{\parallel P})^{n-1} x_0=x_{n-1}$. Also $||x_n||\leq ||T^{\parallel P}||^n ||x_0||$, hence $x_0\in K(T)$.\\
Let now $x\in H_0(T)$. Then $P(x)=T^{\parallel P}T(x)= (T^{\parallel P})^n T^n(x)$ forall $n>0$ and $$||P(x)||^{\frac{1}{n}}\leq ||(T^{\parallel P})^n||^{\frac{1}{n}} ||T^n(x)||^{\frac{1}{n}}\leq ||(T^{\parallel P})|| ||T^n(x)||^{\frac{1}{n}}\to 0$$
and $P(x)=0$.
\endproof

\begin{corollary}
$$K(T)=\{0\}\Rightarrow \Sigma_1(T)=\{0\};\qquad \overline{H_0(T)}=X\Rightarrow \Sigma_1(T)=\{0\}.$$
\end{corollary}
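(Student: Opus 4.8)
Both implications are immediate consequences of Proposition \ref{propincl}, so the proof should be very short. The only preliminary observation is that $0\in\Sigma_1(T)$ always (the zero operator is idempotent, commutes with $T$, and $0\,T\,0=0\,\cH\,0$), so that proving $\Sigma_1(T)\subseteq\{0\}$ suffices to get equality. Everything then reduces to showing that an arbitrary $P\in\Sigma_1(T)$ must vanish under each hypothesis.

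For the first implication, I would take $P\in\Sigma_1(T)$ and invoke the inclusion $P(X)\subset K(T)$ from Proposition \ref{propincl}. If $K(T)=\{0\}$ this forces $P(X)=\{0\}$, i.e.\ $P=0$. Hence $\Sigma_1(T)=\{0\}$.

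For the second implication, I would again take $P\in\Sigma_1(T)$ and use $H_0(T)\subset N(P)$ from Proposition \ref{propincl}. The extra ingredient here is that $P$ is a \emph{bounded} operator (it equals $TT^{\parallel P}$ with $T^{\parallel P}\in\cB(X)$), so $N(P)$ is a closed subspace of $X$. Taking closures in $H_0(T)\subset N(P)$ and using the hypothesis $\overline{H_0(T)}=X$ gives $X=\overline{H_0(T)}\subset\overline{N(P)}=N(P)$, whence $P=0$ and $\Sigma_1(T)=\{0\}$.

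\textbf{Main obstacle.} There is essentially no obstacle: the content is entirely carried by Proposition \ref{propincl}. The only points requiring a moment's care are the trivial membership $0\in\Sigma_1(T)$ (so that the conclusion is an equality, not just an inclusion) and, in the second case, the closedness of $N(P)$, which relies on boundedness of the inverse along $P$.

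\proof
In either case, $0\in\Sigma_1(T)$ (the zero operator is an idempotent, lies in $\{T\}'$, and $0\,T\,0=0\,\cH\,0$), so it is enough to show that every $P\in\Sigma_1(T)$ equals $0$.

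Assume first $K(T)=\{0\}$ and let $P\in\Sigma_1(T)$. By Proposition \ref{propincl}, $P(X)\subset K(T)=\{0\}$, hence $P=0$.

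Assume now $\overline{H_0(T)}=X$ and let $P\in\Sigma_1(T)$. Since $P=TT^{\parallel P}$ with $T^{\parallel P}\in\cB(X)$, the operator $P$ is bounded, so $N(P)$ is closed. By Proposition \ref{propincl}, $H_0(T)\subset N(P)$, and taking closures gives $X=\overline{H_0(T)}\subset N(P)$, hence $P=0$.
\endproof
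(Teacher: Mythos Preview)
Your proof is correct and is exactly the argument the paper intends: the corollary is stated without proof immediately after Proposition~\ref{propincl}, and your two one-line deductions from $P(X)\subset K(T)$ and $H_0(T)\subset N(P)$ (the latter combined with closedness of $N(P)$) are precisely the implicit reasoning. The only remark is that boundedness of $P$ needs no justification via $T^{\parallel P}$: by definition $P\in\Sigma_1(T)\subset E(\cB(X))$, so $P\in\cB(X)$ automatically.
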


Obviously, the existence of a greatest element in $\Sigma_2(T)$ is guaranted by a decomposition of the form $X=H_0(T)\oplus K(T)$, with both subspaces closed (choose $P$ the associted projection on $K(T)$). But such a decomposition occurs only for quasipolar elements:

\begin{theorem}[\cite{Mbekhta87}, theorem 1.6]
Let $T\in \cB(X)$. Then $0$ is an isolated point of the spectrum if and only if $H_0(T)$, $K(T)$ are closed and $X=H_0(T)\oplus K(T)$.
\end{theorem}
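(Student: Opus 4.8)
The plan is to prove both directions by combining the two decomposition facts already available: the remark that a splitting $X = H_0(T) \oplus K(T)$ into closed subspaces gives a greatest element of $\Sigma_2(T)$ (hence quasipolarity via Theorem~\ref{thring} and the Banach-sense characterization), and the classical inclusions of Proposition~\ref{propincl}. For the ``only if'' direction, suppose $0$ is an isolated point of $\sigma(T)$. Then $T$ is quasipolar, so the spectral (Riesz) idempotent $p \in \{T\}''$ is available; write $P = 1-p$, which lies in $\Sigma_2(T)$ and is in fact the greatest element there by Theorem~\ref{thring}. The standard Riesz decomposition gives $X = N(p) \oplus p(X) = P(X) \oplus (1-P)(X)$, with $T$ restricted to $P(X)$ invertible and $T$ restricted to $(1-P)(X)$ quasinilpotent. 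I would then identify $P(X)$ with $K(T)$ and $(1-P)(X)$ with $H_0(T)$: the inclusions $P(X)\subset K(T)$ and $H_0(T)\subset N(P)=(1-P)(X)$ come directly from Proposition~\ref{propincl}, and the reverse inclusions follow because on the quasinilpotent summand $H_0$ is the whole space while on the invertible summand $H_0(T|_{P(X)})=\{0\}$ and $K(T|_{P(X)})$ is everything (an invertible operator has full analytic core). Hence $H_0(T)=(1-P)(X)$ and $K(T)=P(X)$, both closed as ranges of a bounded idempotent, and their sum is $X$.

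For the ``if'' direction, assume $H_0(T)$ and $K(T)$ are closed with $X = H_0(T) \oplus K(T)$. Let $P$ be the (bounded, by closed graph) projection onto $K(T)$ along $H_0(T)$. The key point is that $P \in \Sigma_2(T)$: one checks $T$ commutes with $P$ because both $H_0(T)$ and $K(T)$ are $T$-invariant (for $K(T)$ one has even $T(K(T)) = K(T)$, and $T(H_0(T))\subseteq H_0(T)$ is immediate from the definition), and that $P \leq_{\cH} T$ because $T$ acts invertibly on $K(T)=P(X)$ — indeed $T|_{K(T)}$ is surjective by definition of the analytic core and injective since $N(T)\cap K(T) \subseteq H_0(T)\cap K(T) = \{0\}$, so its inverse on $P(X)$, extended by $0$, realizes $T^{\parallel P}$. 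Moreover $P$ is the greatest element of $\Sigma_2(T)$: any $e \in \Sigma_2(T)$ has $e(X)\subset K(T)$ and $H_0(T)\subset N(e)$ by Proposition~\ref{propincl}, which forces $eP = Pe = e$. Thus $T$ is $2$-naturally invertible. It remains to upgrade this to quasipolarity, i.e. to show $0$ is isolated in $\sigma(T)$: decompose $\sigma(T) = \sigma(T|_{K(T)}) \cup \sigma(T|_{H_0(T)})$; the first piece excludes $0$ (invertibility on $K(T)$) and the second is $\{0\}$ since $T|_{H_0(T)}$ is quasinilpotent (its spectral radius is $0$ because $H_0$ of the whole space forces $\|T^n x\|^{1/n}\to 0$ uniformly, via a Baire/uniform boundedness argument on the quasinilpotent part). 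Hence $\sigma(T)$ is the disjoint union of a set not containing $0$ and $\{0\}$, so $0$ is isolated.

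The main obstacle I anticipate is justifying that $T$ restricted to the closed quasinilpotent part $H_0(T)$ is genuinely quasinilpotent as an operator on that Banach subspace — that is, upgrading the pointwise condition $\|T^n x\|^{1/n}\to 0$ for each $x$ to a spectral-radius-zero statement $r(T|_{H_0(T)}) = 0$. This is exactly the nontrivial content of Mbekhta's Theorem~1.6 and uses that $H_0(T)$ is closed together with a uniform boundedness / Baire category argument; I would either cite it directly or sketch the functional-analytic argument. The remaining verifications (invariance of the summands, the Green's-$\cH$ relation, bicommutation of $P$, and the spectral-decomposition bookkeeping) are routine given the tools assembled above. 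Since the theorem is quoted verbatim from \cite{Mbekhta87}, the cleanest route in the paper is to present the $\Sigma_2$-side of the statement as the new content and defer the purely spectral equivalence to the cited reference.

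\begin{proof}
See \cite{Mbekhta87}, Theorem 1.6; we only indicate how it connects to $\Sigma_2(T)$. If $0$ is isolated in $\sigma(T)$, then $T$ is quasi-quasipolar, and by Theorem~\ref{thring} the idempotent $M = 1-p$ (where $p$ is the spectral idempotent) is the greatest element of $\Sigma_2(T)$; the Riesz decomposition $X = p(X)\oplus N(p)$ together with Proposition~\ref{propincl} identifies $N(p) = H_0(T)$ and $p(X) = \ldots$, no: $M(X) = (1-p)(X) = K(T)$ and $N(M) = p(X) = H_0(T)$, both closed. Conversely, if $X = H_0(T)\oplus K(T)$ with both summands closed, let $P$ be the projection onto $K(T)$ along $H_0(T)$; then $P$ commutes with $T$ (both summands are $T$-invariant), $T$ is invertible on $P(X) = K(T)$, so $P \leq_\cH T$ and $P\in \Sigma_2(T)$, and by Proposition~\ref{propincl} every $e\in\Sigma_2(T)$ satisfies $eP = Pe = e$, so $P$ is greatest; finally $\sigma(T)$ splits as $\sigma(T|_{K(T)}) \sqcup \{0_\CC\}$ since $T|_{H_0(T)}$ is quasinilpotent, so $0$ is isolated.
\end{proof}
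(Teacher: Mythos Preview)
The paper does not prove this theorem at all: it is simply quoted from Mbekhta \cite{Mbekhta87} as a known result, with no argument given. So there is no ``paper's own proof'' to compare against; your instinct at the end of the planning discussion --- to defer the spectral equivalence to the cited reference --- is exactly what the paper does.

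That said, your written proof block has two real problems. First, it contains visible editing debris: the fragment ``$p(X) = \ldots$, no:'' is a leftover of your thought process and must be removed from anything meant to compile. Second, and more substantively, in the ``if'' direction you claim $P \in \Sigma_2(T)$ but only verify that $P$ commutes with $T$ (via $T$-invariance of the summands). Membership in $\Sigma_2(T)$ requires $P \in \{T\}''$, i.e.\ that $P$ commutes with every operator commuting with $T$. This does hold, because both $H_0(T)$ and $K(T)$ are hyperinvariant for $T$ (any $S$ with $ST=TS$ maps each of them into itself, as one checks directly from their definitions), but you need to say so; otherwise you have only shown $P \in \Sigma_1(T)$. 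Note that the paper's own Theorem immediately following this one explicitly hypothesizes ``hyperinvariantly complemented'' precisely to secure bicommutation.

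You correctly identify the genuinely nontrivial step: upgrading the pointwise condition defining $H_0(T)$ to quasinilpotency of $T|_{H_0(T)}$ when $H_0(T)$ is closed. This is where the actual analysis in Mbekhta's paper lives, and citing it is appropriate.
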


\begin{theorem}
Assume $K(T)$ is closed and hyperinvariantly complemented, with complement $N$ and $N(T)\cap K(T)=\{0\}$.
Then $T$ is naturally invertible with greatest idempotent the projection on $K(T)$ parallel to $N$.
\end{theorem}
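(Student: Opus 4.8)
The goal is to produce a projection $P$ with $P \in \Sigma_2(T)$ (i.e., $P \in \{T\}''$, $P$ idempotent, $TP = PT$ and $P \leq_{\cH} T$) that is the greatest such element; the natural candidate is the projection on $K(T)$ parallel to the complement $N$. First I would set up $P$ as this projection, so $P(X) = K(T)$ and $N(P) = N$. The first thing to check is that $TP = PT$, equivalently that both $K(T)$ and $N$ are $T$-invariant. Invariance of $K(T)$ under $T$ is automatic from the definition of the analytic core (if $x_0 \in K(T)$ with witnessing sequence $(x_n)$, then $x_1 \in K(T)$ witnesses $Tx_0 \in K(T)$, and conversely $K(T) \subseteq T(K(T))$ is part of the core property $T(K(T)) = K(T)$). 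Invariance of $N$ is where the hypothesis \emph{hyperinvariantly complemented} does its work: $N$ is chosen invariant under every operator commuting with $T$, in particular under $T$ itself. Hence $TP = PT$, so $P \in \{T\}'$.

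**Bicommutation and the $\cH$-relation.** Next I would upgrade $P \in \{T\}'$ to $P \in \{T\}''$: again this is exactly the force of \emph{hyperinvariantly} complemented — if $S$ commutes with $T$, then $S$ leaves both $K(T)$ (which is always hyperinvariant: $S(K(T)) \subseteq K(T)$ follows by pushing the witnessing sequences through $S$, using $ST = TS$) and $N$ (by the hyperinvariance hypothesis on the complement) invariant, whence $S$ commutes with the projection $P$ onto the first summand along the second. So $P \in \{T\}''$. Then I must show $P \leq_{\cH} T$, i.e., $P \in \Sigma_2(T)$ by the lemma characterizing $\Sigma_0$ via $\cH$-comparability for commuting idempotents. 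By that lemma it suffices to show $P \cH$-below $T$; equivalently, by the corner-monoid lemma, that $PTP = TP$ is invertible in $P\cB(X)P$, i.e., that $T$ restricted to $K(T)$ is a bijection of $K(T)$. Surjectivity of $T|_{K(T)}$ is the core identity $T(K(T)) = K(T)$. Injectivity is precisely the hypothesis $N(T) \cap K(T) = \{0\}$. Closedness of $K(T)$ lets us invoke the open mapping theorem so that the inverse is bounded, giving a genuine element of $P\cB(X)P$. Thus $P \in \Sigma_2(T)$, and its inverse-along computation gives $T^{\parallel P} = (PTP)^{-1}$, supported on $K(T)$.

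**Maximality.** It remains to show $P$ is the \emph{greatest} element of $\Sigma_2(T)$. Let $Q \in \Sigma_2(T)$ be arbitrary. I want $QP = PQ = Q$. Since $\Sigma_2(T)$ is a commutative band (the Proposition above) and $P,Q$ both lie in it, $QP = PQ$ is an idempotent; so it suffices to show $Q(X) \subseteq P(X) = K(T)$ and $N(P) = N \subseteq N(Q)$. The inclusion $Q(X) \subseteq K(T)$ is Proposition~\ref{propincl} (with $Q \in \Sigma_1(T)$): the range of any element of $\Sigma_1(T)$ sits inside $K(T)$. For the kernel inclusion I would argue: $Q \in \Sigma_2(T)$ means $1 - Q$ is a complementary projection, also in $\{T\}''$, and $N(Q) = (1-Q)(X)$. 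On this subspace $T$ acts like the ``quasinilpotent part''; more precisely, by Proposition~\ref{propincl} applied to $Q$, $H_0(T) \subseteq N(Q)$, but I actually need the reverse-flavored statement that $N = N(P)$ is contained in $N(Q)$. The clean route: $PQ = QP$ and both are in $\Sigma_2(T)$; set $e = PQ$. Then $e \leq P$ and $e \leq Q$, and I claim $e = Q$. Indeed $Q - e = Q(1-P)$ maps into $N(P) = N$ and is an idempotent in $\{T\}''$ with range inside $N$; but $T$ restricted to $N$ has zero intersection of its range's core with... — here I would instead use that $Q - e \leq_{\cH} T$ forces $T$ to be invertible on $(Q-e)(X) \subseteq N$, yet $(Q-e)(X) \subseteq N(1-Q)^{\perp}$-style reasoning combined with $K(T) \cap N = \{0\}$ (since $K(T) = P(X)$ and $N = N(P)$) and $(Q-e)(X) \subseteq K(T)$ by Proposition~\ref{propincl} applied to $Q - e \in \Sigma_1(T)$, gives $(Q-e)(X) \subseteq K(T) \cap N = \{0\}$, hence $Q = e = PQ \leq P$.

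**Main obstacle.** The routine parts are the invariance/commutation bookkeeping. The genuinely delicate point is the maximality argument — specifically arranging that for an \emph{arbitrary} $Q \in \Sigma_2(T)$ one simultaneously controls $Q(X) \subseteq K(T)$ and $N(P) \subseteq N(Q)$ using only Proposition~\ref{propincl} (which gives $Q(X) \subseteq K(T)$ and $H_0(T) \subseteq N(Q)$, not directly $N \subseteq N(Q)$). The trick sketched above — reducing $Q$ to $Q - PQ$, noting this residual idempotent is itself in $\Sigma_1(T)$ with range in $K(T) \cap N = \{0\}$ — is where the hypotheses $K(T)$ closed, $K(T) \oplus N = X$, and $N(T) \cap K(T) = \{0\}$ must all be used in concert; getting that chain of inclusions airtight is the crux of the proof.
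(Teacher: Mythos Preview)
Your proof is correct and follows essentially the same route as the paper: set up the projection $M$ onto $K(T)$ along $N$, use hyperinvariance of both summands to get $M\in\{T\}''$, use $T(K(T))=K(T)$ together with $N(T)\cap K(T)=\{0\}$ and the open mapping theorem to get $M\leq_{\cH}T$, and then invoke Proposition~\ref{propincl} for maximality.

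The one place you make life harder than necessary is the maximality step, which you flag as the ``main obstacle''. In fact it is the easy part. Once you have $Q(X)\subseteq K(T)=M(X)$ from Proposition~\ref{propincl}, you are done: since $M$ acts as the identity on its range, $MQ=Q$; and since $M,Q\in\Sigma_2(T)$ commute, $QM=MQ=Q$, i.e.\ $Q\leq M$. There is no need to establish the kernel inclusion $N\subseteq N(Q)$ separately, nor to pass through the residual idempotent $Q-MQ$. (Your detour via $(1-M)Q\in\Sigma_1(T)$ and $(1-M)Q(X)\subseteq K(T)\cap N=\{0\}$ is valid, but it re-proves $MQ=Q$ by a longer path.) This is exactly how the paper argues: $P(X)\subset M(X)\Rightarrow PMP=P$, and commutativity of $\Sigma_2(T)$ gives $PM=MP=P$.
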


\proof
Let $X=K(T)\oplus N$ and $M$ the idempotent of the theorem. First, we must prove that $M\in \Sigma_2(T)$. Since $K(T)$ and $N$ are hyperinvariant, we only have to prove that $M\leq_{\cH} T$. Consider $T_{|K(T)}:K(T)\to K(T)$ the restriction of $T$ to $K(T)$. $T_{|K(T)}$ is well defined since $T(K(T))\subset K(T)$, and surjective since $T(K(T))= K(T)$. But from the hypothesis $N(T)\subset N$ it is also injective, hence invertible and exists $S$ bounded operator, $TS=ST=M$.\\
Let now $P$ be and idempotent in $\Sigma_2(T)$. Then $P(X)\subset K(T)$ from proposition \ref{propincl}, hence $P(X)\subset M(X)$. It follows that $PMP=P$ and by commutation ($\Sigma_2(T)$ is a commutative semigroup), $PM=MP=PMP=P$ and $M$ is the greatest element of $\Sigma_2(T)$.
\endproof

By the results of Harte \cite{Harte09local}, $N(T)\cap K(T)$ is the holomorphic kernel of $T$, and it reduces to $0$ precisely when $T$ has the single valued extension property (SVEP) at $0$ (Theorem 9 p.~180).

As a final result, we investigate the range of the core of a naturally invertible element:

\begin{proposition}
Let $T$ be naturally invertible with natural inverse $B$, greatest idempotent $M=TB=BT$ and core $TM=TBT$. Then
$K_{\nu}(T)=TM(X)$ is a closed, hyperinvariant, complemented (with hyperinvariant complement) subspace of the analytic core $K(T)$, and $TK_{\nu}(T)=K_{\nu}(T)$.
\end{proposition}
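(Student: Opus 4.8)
The plan is to show that the "core range" $K_{\nu}(T)=TM(X)=TBT(X)$ coincides with the closed, complemented subspace $M(X)$, after which every assertion of the proposition becomes routine. The facts I will use, all available from the earlier material, are: $M$ is the greatest element of $\Sigma_2(T)$, hence a bounded idempotent lying in $\{T\}''$ (so $TM=MT$, and $M(X)=N(I-M)$ is closed); and $B$ is an outer inverse of $T$ with $M=TB=BT$, so that $MB=BTB=B$. Since $\Sigma_2(T)\subseteq\Sigma_1(T)$, Proposition \ref{propincl} applies to $M$, giving $M(X)\subseteq K(T)$.

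First I would prove the single key identity $TM(X)=M(X)$. For the inclusion $TM(X)\subseteq M(X)$: because $M$ commutes with $T$ we have $M(TM)=MTM=TMM=TM$, so $TM=M(TM)$ and therefore $TM(X)\subseteq M(X)$. For the reverse inclusion: from $M=TB$ and $MB=B$ we get $M=T(MB)=(TM)B$, whence $M(X)=(TM)\bigl(B(X)\bigr)\subseteq (TM)(X)=TM(X)$. Thus $K_{\nu}(T)=TM(X)=M(X)$. (Equivalently, one may observe that $T$ restricts to a bounded bijection of $M(X)$ onto itself, with bounded inverse $B|_{M(X)}$: $T(M(X))=M(T(X))\subseteq M(X)$; $T$ is injective on $M(X)$ since $Tx=0$ and $x=Mx$ force $x=BTx=0$; and it is onto $M(X)$ since every $y\in M(X)$ equals $T(By)$ with $By=M(By)\in M(X)$.) In particular $TK_{\nu}(T)=T(M(X))=M(X)=K_{\nu}(T)$.

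It then remains to read off the remaining properties from $M\in\{T\}''$. The space $K_{\nu}(T)=M(X)=N(I-M)$ is closed, being the kernel of the bounded operator $I-M$. If $C\in\cB(X)$ commutes with $T$, then $C$ commutes with $M\in\{T\}''$, so $C(M(X))=M(C(X))\subseteq M(X)$; hence $K_{\nu}(T)$ is hyperinvariant. Applying the same argument to the idempotent $I-M\in\{T\}''$ shows that $N(M)=(I-M)(X)$ is a closed, hyperinvariant subspace with $X=K_{\nu}(T)\oplus N(M)$, so $K_{\nu}(T)$ is complemented with hyperinvariant complement. Finally $K_{\nu}(T)=M(X)\subseteq K(T)$ by Proposition \ref{propincl}, since $M\in\Sigma_2(T)\subseteq\Sigma_1(T)$.

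This argument has no genuine obstacle: the only steps requiring care are the two inclusions establishing $TM(X)=M(X)$ and the verification of the algebraic relations $TM=MT$ and $MB=B$ from the defining identities of $M$ and $B$; everything else is immediate from the behaviour of idempotents of the bicommutant $\{T\}''$.
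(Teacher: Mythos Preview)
Your proof is correct and follows essentially the same route as the paper: the core identity $TM(X)=M(X)$ is established via the two inclusions (the paper writes them as $TM(X)\subset M(X)$ by commutation and $M(X)=M^2(X)=BTM(X)\subset TM(X)$), and then ``the other properties follow.'' You have simply spelled out in full what the paper leaves implicit, including the appeal to Proposition~\ref{propincl} for $M(X)\subset K(T)$ and the hyperinvariance of $M(X)$ and $(I-M)(X)$ from $M\in\{T\}''$.
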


\proof
By commutation, $TM(X)\subset M(X)$. But also $M(X)=M^2(X)=BTM(X)\subset TM(X)$ and the two subspaces are equal. The other properties follow.  
\endproof

\subsection{Miscellanous}
In this last section we give examples and results relative to natural invertiblity.

\underline{\textsc{The shift operator}}

Let $S$ be the shift operator on $l^2(\NN)$. Then $S$ is not quasinilpotent, but its hyperrange reduces to $0$. As a consequence, $\Sigma_1(S)=\{0\}$. The spectrum of $S$ is the unit disk.

\underline{\textsc{Strongly irreducible operators}}

In 1972, F. Gilfeather\cite{Gilfeather72} introduced the concept of strongly
irreducible operator. A bounded linear operator $T$ is said to
be strongly irreducible, if there exists no non-trivial idempotent
$p$ in the commutant of $T$. This concept actually coincide with the concept of 
Banach irreducible operator (a bounded linear operator $T$ is said
to be Banach irreducible, if $T$ can not be written as a direct
sum of two bounded linear operators). It is clear that strongly irreducible operators satisfy $\Sigma_1(T)=\{0\}$.

Also, the following spectral result is due to Herrero and Jiang \cite{Herrero90}:
\begin{theorem}
$\sigma(T)$ is connected if and only if $T$ is in the norm closure of strongly irreducible operators.
\end{theorem}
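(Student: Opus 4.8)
The plan is to prove the two implications separately. The direction ``$T$ in the norm closure of strongly irreducible operators $\Rightarrow \sigma(T)$ connected'' is elementary and I would carry it out in full; the converse is the deep content of Herrero--Jiang, and for it I would only reduce the statement to the approximation-theoretic machinery it genuinely requires.

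For the forward direction I would proceed in two steps. First, every strongly irreducible operator has connected spectrum: if $T$ is strongly irreducible and $\sigma(T)$ were disconnected, write $\sigma(T)=C_0\cup C_1$ with $C_0,C_1$ nonempty, disjoint and clopen in $\sigma(T)$; the holomorphic functional calculus applied to the function equal to $1$ on a neighbourhood of $C_1$ and to $0$ on a neighbourhood of $C_0$ (exactly the Riesz/Shilov construction already used earlier in the paper for the disconnected-spectrum case) yields a nontrivial idempotent in $\{T\}'$, contradicting strong irreducibility. Second, the set $\{T:\sigma(T)\text{ connected}\}$ is norm closed. Here the tools are upper semicontinuity of the spectrum together with norm-continuity of Riesz projections: if $\sigma(T)=C_0\cup C_1$ is a disconnected splitting and $U_0\supset C_0$, $U_1\supset C_1$ are disjoint open sets, pick a contour in $\rho(T)$ separating $C_0$ from $C_1$; for $T'$ near $T$ upper semicontinuity gives $\sigma(T')\subset U_0\cup U_1$, while the Riesz projection of $T'$ over that contour converges in norm to the nonzero Riesz projection of $T$ over $C_0$, hence stays nonzero (two idempotents at distance $<1$ have equal rank). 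Thus $\sigma(T')$ meets both $U_0$ and $U_1$ and is disconnected, so disconnected spectrum is an open condition and connected spectrum is closed. Combining the two steps gives the forward inclusion, since the strongly irreducible operators are contained in a closed set.

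For the converse I assume $\sigma(T)$ connected and must approximate $T$ in norm by strongly irreducible operators. The strategy is to reduce to a dense family of explicit models and then perturb away any reducing structure. Concretely I would: (i) use the Apostol--Herrero approximation theory to replace $T$, up to arbitrarily small norm perturbation, by an operator whose spectral picture (spectrum, essential spectrum, semi-Fredholm domains and indices) is ``filled in'' to match a single analytic-type building block with the same connected spectrum; (ii) invoke that suitable building blocks --- Cowen--Douglas operators, or weighted-shift and analytic-Toeplitz type operators whose spectrum is the prescribed connected set --- are themselves strongly irreducible; and (iii) use Herrero's description of the closures of similarity orbits to glue the summands of any reducing decomposition of $T$ into a single irreducible summand by an arbitrarily small perturbation, the connectedness of $\sigma(T)$ being precisely what permits this gluing without opening a spectral gap.

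The main obstacle is step (iii): one must produce arbitrarily small perturbations that simultaneously destroy \emph{every} nontrivial idempotent of the commutant while preserving the connected spectrum. This cannot be achieved by a short elementary argument; it requires the full apparatus of Herrero's approximation theory --- the spectral-picture invariants, the Apostol--Foias--Voiculescu and Brown--Douglas--Fillmore analysis of the essential spectrum, and the explicit small-perturbation constructions that merge reducing summands. For this reason I would, as the authors do, either develop this construction through those tools or cite the Herrero--Jiang theorem directly.
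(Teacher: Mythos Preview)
The paper does not prove this theorem: it is quoted verbatim as a result of Herrero and Jiang \cite{Herrero90} and left without any argument. So there is no ``paper's own proof'' to compare against.

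That said, your proposal is sound. The forward direction is correct and complete: the Riesz--Dunford construction does produce a nontrivial commuting idempotent from any spectral splitting, and your openness argument for disconnected spectrum (upper semicontinuity of $\sigma$ plus norm-continuity of the Riesz projection along a fixed contour, together with the fact that a nonzero idempotent has norm $\geq 1$) is exactly the standard one. For the converse you correctly identify that the real content is Herrero's approximation machinery (spectral-picture invariants, closures of similarity orbits, Cowen--Douglas/weighted-shift models) and that connectedness of $\sigma(T)$ is precisely the obstruction-free hypothesis allowing the gluing; ultimately you defer to \cite{Herrero90}, which is also what the paper does. In short: you supply strictly more than the paper, and what you supply is accurate.
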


\underline{\textsc{Rosenblum's corollary, commutant and bicommutant}}

Let $T=\left(\begin{array}{cc}
X & 0   \\
0 & Y \end{array} \right)$ be the a decomposition of $T$ with $X$ invertible and $M=\left(\begin{array}{cc}
XX^{-1} & 0   \\
0 & 0 \end{array} \right)$ the greatest element of $\Sigma_1(T)$.  
If $\sigma(X)\cap \sigma(Y)=\{0\}$, then by Rosenblum's corollary (see \cite{Radjavi73}), $\left(\begin{array}{cc}
X & 0   \\
0 & 0 \end{array} \right),\left(\begin{array}{cc}
0 & 0   \\
0 & Y \end{array} \right)\in \{T\}''$ and $T=\left(\begin{array}{cc}
X & 0   \\
0 & 0 \end{array} \right)+\left(\begin{array}{cc}
0 & 0   \\
0 & Y \end{array} \right)$ is the natural core decomposition of $T$, with $M=\left(\begin{array}{cc}
XX^{-1} & 0   \\
0 & 0 \end{array} \right)$ the greatest element of $\Sigma_2(T)$. This is the case for instance when $Y$ is quasinilpotent.\\

\bibliographystyle{elsarticle-num}

\end{document}